\title{Endomorphisms of Fano 3-folds and log Bott vanishing}
\author{Burt Totaro}
\date{  }
\def\R{\text{\bf R}}
\def\C{\text{\bf C}}
\def\P{\text{\bf P}}
\def\arrow{\rightarrow}
\DeclareMathOperator{\sing}{sing}
\DeclareMathOperator{\Nef}{Nef}
\DeclareMathOperator{\tor}{tor}
\DeclareMathOperator{\cotor}{cotor}
\DeclareMathOperator{\Tor}{Tor}
\DeclareMathOperator{\Gr}{Gr}
\DeclareMathOperator{\tr}{tr}
\begin{document}
\maketitle
\newtheorem{theorem}{Theorem}[section]
\newtheorem{corollary}[theorem]{Corollary}
\newtheorem{proposition}[theorem]{Proposition}
\newtheorem{lemma}[theorem]{Lemma}

\theoremstyle{definition}
\newtheorem{definition}[theorem]{Definition}
\newtheorem{example}[theorem]{Example}

\theoremstyle{remark}
\newtheorem{remark}[theorem]{Remark}

A basic problem of algebraic dynamics is to determine which projective
varieties admit an endomorphism of degree greater than 1.
To avoid degenerate cases, we focus on
{\it int-amplified }endomorphisms $f\colon X\to X$, meaning that
there is an ample divisor $H$ such that $f^*H-H$ is ample
\cite{Meng-amplified, MZnormal}. Fakhruddin, Meng, Zhang, and Zhong
conjectured that a smooth complex projective rationally connected variety
that admits an int-amplified endomorphism
must be a toric variety \cite[Question 4.4]{Fakhruddin},
\cite[Question 1.1]{MZZ}. Kawakami
and I introduced a new approach to this problem, showing
that a variety with an int-amplified
endomorphism must satisfy {\it Bott vanishing}
\cite[Theorem C]{KTot}.

In this paper, we extend that result to
a logarithmic version of Bott vanishing for an endomorphism
with a totally invariant divisor (Theorem \ref{endo-log-Bott}).
We also connect log Bott vanishing with some related problems:
which varieties are images of toric varieties (Theorem
\ref{toric-log-Bott})? Which varieties
admit morphisms of unbounded degree from some other variety
(Theorem \ref{morphism-log-Bott})?

We apply these results to Fano 3-folds.
Meng, Zhang, and Zhong showed that the only smooth complex Fano 3-folds
that admit an int-amplified endomorphism are the toric ones
\cite[Theorem 1.4]{MZZ}. 
Also, Achinger, Witaszek, and Zdanowicz showed
that the only smooth complex Fano 3-folds that are images
of toric varieties are the toric ones
\cite[proof of Theorem 4.4.1]{AWZ1},
\cite[Theorems 6.9 and 7.7]{AWZ2}.
Using log Bott vanishing, we reprove both results
and extend them to characteristic $p$, for morphisms of degree
prime to $p$ (Theorems \ref{endo-3-fold}
and \ref{image-3-fold}).
This resolves \cite[Question 1.6]{KTot}.
For Fano 3-folds with Picard number 1,
these extensions already appeared in \cite[Theorem A
and Proposition 3.10]{KTot}.

\section{Notation}

A Weil divisor
(with integer coefficients) on a normal projective variety
is called {\it ample }if
some positive multiple is an ample Cartier divisor.
A {\it contraction }of a normal variety $X$
over a field $k$
is a proper morphism $\pi\colon X\to Y$ with
$\pi_*O_X=O_Y$. For a projective variety $X$ over $k$,
$N_1(X)$ is the vector space of 1-cycles with real coefficients
modulo numerical equivalence, which has finite dimension.
The {\it N\'eron-Severi} space $N^1(X)$ is the space
of $\R$-Cartier divisors modulo numerical equivalence,
and so $N^1(X)=N_1(X)^*$.

A morphism of varieties $f\colon Y\to X$ over a field $k$
is {\it separable }if it is dominant
and the function field $k(Y)$ is a separable field extension
of $k(X)$. For $k$ algebraically closed, $f$ is separable if and only if
the derivative of $f$ is surjective at some smooth point of $Y$.
For an endomorphism $f$ of a variety $X$,
a closed subset $S$ of $X$ is {\it totally invariant }under $f$
if $f^{-1}(S)=S$.

For a normal variety $X$ over a field $k$ and
$i\geq 0$, we write $\Omega^i_X$ for $\Omega^i_{X/k}$. The sheaf
of {\em reflexive differentials} $\Omega^{[i]}_X$
is the double dual $(\Omega^i_X)^{**}$.
For a reduced divisor $D$ on $X$, $\Omega^{[i]}_X(\log D)$
(reflexive differentials with {\it log poles }along $D$)
denotes the sheaf of $i$-forms $\alpha$ on the smooth locus
of $X-D$
such that both $\alpha$ and $d\alpha$ have at most a simple pole
along each component of $D$. For a reflexive sheaf $M$ and
a Weil divisor $E$ on $X$, we write $M(E)$
for the reflexive sheaf $(M\otimes O_X(E))^{**}$.
If $X$ is smooth over $k$, then $O_X(E)$ is a line bundle,
and $M(E)$
is just the tensor product $M\otimes O_X(E)$.

\section{The trace of a differential form with log poles}

Here is the key lemma for this paper,
extending the proof of \cite[Theorem C]{KTot} to allow log poles.

\begin{lemma}
\label{trace}
Let $f\colon Y\to X$ be a finite surjective morphism of normal
varieties over a perfect field $k$. Let $A$ be a Weil divisor
on $X$. Let $E_X$ and $D_X$ be reduced divisors on $X$
with $0\leq E_X\leq D_X$. Let $D_Y$ be the sum of the
components of $f^{-1}(D_X)$ along which the ramification
degree $e$ of $f$ is invertible in $k$, and let $E_Y$ be the corresponding
divisor inside $f^{-1}(E_X)$.
Then, for every $i\geq 0$,
the pullback and pushforward of differential forms give maps
of reflexive sheaves
$$\Omega^{[i]}_X(\log D_X)(A-E_X)\to f_*(\Omega^{[i]}_Y(\log D_Y)(f^*A-E_Y))
\to \Omega^{[i]}_X(\log D_X)(A-E_X),$$
with composition equal to multiplication by $\deg(f)$.
\end{lemma}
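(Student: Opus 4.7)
The plan is to reduce to a calculation in codimension one. Both sides of the two arrows are reflexive sheaves of $O_X$-modules --- immediate for $\Omega^{[i]}_X(\log D_X)(A-E_X)$ by construction, and true for $f_*(\Omega^{[i]}_Y(\log D_Y)(f^*A-E_Y))$ because the pushforward of a reflexive sheaf under a finite morphism of normal varieties is reflexive. Hence it suffices to construct the two maps and verify that the composition is multiplication by $\deg(f)$ on the complement of a codimension-$\geq 2$ closed subset of $X$. I would shrink $X$ to a big open $U$ such that $U$ and $f^{-1}(U)$ are smooth, every component of $D_X$ (resp.\ $f^{-1}(D_X)$) is smooth on $U$ (resp.\ $f^{-1}(U)$), and the components of each divisor are pairwise disjoint; all the singularities and intersection loci thereby omitted have codimension $\geq 2$ in $X$.

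On $U$ the morphism $f$ is a finite flat map of smooth varieties, and the reflexive sheaves become ordinary tensor products of the locally free log-differential sheaves with the invertible sheaves attached to the divisor twists. At a codimension-one point of $Y$ lying over a prime divisor $D_i$ of $X$, choose local coordinates $(t,z_1,\ldots,z_{n-1})$ on $X$ with $D_i=\{t=0\}$ and $(t',z_1,\ldots,z_{n-1})$ on $Y$ with $t = u (t')^e$, where $u$ is a local unit and $e$ is the ramification index. The identity
$$\frac{dt}{t} = e\,\frac{dt'}{t'} + \frac{du}{u}$$
controls the pullback of a log form: if $e$ is invertible in $k$, the right-hand side retains a log pole along $\{t'=0\}$, and that component lies in $D_Y$ by definition; if instead $p\mid e$, the right-hand side is regular, and the component is excluded from $D_Y$. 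A short case analysis --- over the possibilities for whether $D_i$ lies in $D_X$, whether it lies in $E_X$, and whether $e$ is tame --- confirms that pullback of differential forms sends $\Omega^{[i]}_X(\log D_X)(A-E_X)$ into $f_*\Omega^{[i]}_Y(\log D_Y)(f^*A-E_Y)$.

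For the trace, I would first define it on the \'etale locus of $f$ using the isomorphism $f^*\Omega^i_X \cong \Omega^i_Y$ composed with the standard trace $\tr\colon f_*O_Y\to O_X$ (well-defined because $k$ is perfect, so residue field extensions at codimension-one points are separable), then extend across the ramified codimension-one points by an analogous local computation in the coordinates above. The composition pullback-then-trace equals $\deg(f)$ on the \'etale locus by the usual identity, and hence on all of $U$ since both sides are sections of a reflexive sheaf and agree in codimension one. The main subtlety is the bookkeeping at wildly ramified codimension-one points: a log form on $X$ can pull back to a regular form on $Y$, and conversely the trace of a regular form on $Y$ can acquire a log pole on $X$. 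The definitions of $D_Y$ and $E_Y$ --- including only components along which $e$ is invertible in $k$ --- are precisely tuned so that both maps land in the stated sheaves.
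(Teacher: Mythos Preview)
Your overall architecture matches the paper's: reduce to codimension one by reflexivity, verify the pullback in local coordinates using $dt/t = e\,dt'/t' + du/u$, then construct the trace and check the composition. The pullback portion is essentially complete.

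The gap is in the trace. First, ``define it on the \'etale locus \ldots\ then extend by an analogous local computation'' is not a definition: at ramified points you need an actual construction of $\tr_f$ on differential forms, and the paper invokes the Garel--Kunz trace (which exists wherever $f$ is finite flat with lci fibers) rather than attempting to write it in coordinates. Second, and more seriously, your case analysis for the trace stops short of the hard cases. When $D_1\le E_X$ you must show that $\tr_f$ carries $\Omega^i_Y(\log D_2)(-D_2)=(h,dh)\Omega^*_Y$ into $(g,dg)\Omega^*_X$ when $e\in k^*$, and carries all of $\Omega^i_Y$ into $(g,dg)\Omega^*_X$ when $e=0$ in $k$. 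Neither follows from the coordinate identity you wrote down; the paper uses a separate compatibility result (H\"ubl): near the generic point of $D_1$, the square
\[
\xymatrix@C-10pt@R-10pt{
\Omega^*_Y \ar[r] \ar[d]^{\tr_f} & \Omega^*_{D_2}\ar[d]^{e\,\tr_f} \\
\Omega^*_X \ar[r] & \Omega^*_{D_1}
}
\]
commutes. Since $(g,dg)\Omega^*_X=\ker(\Omega^*_X\to\Omega^*_{D_1})$, this diagram gives exactly what is needed in both cases (in the wild case because $e=0$ kills the right-hand vertical map). Without this input, the ``analogous local computation'' does not go through.

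Your closing intuition is also inverted. The trace of a regular form on $Y$ never acquires a log pole on $X$; the Garel--Kunz trace already sends $\Omega^i_Y$ to $\Omega^i_X$. The genuine issue in the wild case with $D_1\le E_X$ is the \emph{opposite}: since that component of $f^{-1}(E_X)$ is excluded from $E_Y$, you are tracing forms with no vanishing imposed upstairs, yet you need them to vanish along $D_1$ downstairs. That vanishing is precisely the content of the compatibility diagram above and is not a bookkeeping matter.
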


In characteristic $p$, even if the morphism $f$
in Lemma \ref{trace} has degree prime to $p$,
it may be wildly ramified along some divisors.
That requires the careful choice of $D_Y$ and $E_Y$
in the statement, in order to get something true.

\begin{proof}
Since $k$ is perfect, the normal varieties $X$ and $Y$ over $k$
are geometrically normal \cite[Tag 038O]{Stacks},
hence smooth over $k$ in codimension 1.

As in the statement, let $D_Y$ be the sum of the
components of $f^{-1}(D_X)$ along which the ramification
degree $e$ of $f$ is invertible in $k$. (The {\it ramification
degree }$e$
along a component $D_2$ of $f^{-1}(D_X)$
is the coefficient of $D_2$ in the pullback
Weil divisor $f^*D_X$, which is defined since $f$ is finite
and $D_X$ is generically Cartier.
The condition that $e$ is invertible in $k$
is not the same as $f$ being
tamely ramified along $D_2$; tame ramification
would mean that $e$ is invertible in $k$ {\it and }that
$f\colon D_2\to f(D_2)$ is separable.)
Likewise, let $E_Y$ be the sum of the components of $f^{-1}(E_X)$
along which the ramification degree of $f$ is invertible in $k$.

We want to construct pullback and pushforward maps of reflexive sheaves:
$$\Omega^{[i]}_X(\log D_X)(-E_X)\to f_*\Omega^{[i]}_Y(\log D_Y)(-E_Y)
\to \Omega^{[i]}_X(\log D_X)(-E_X).$$
First consider the pullback map.
Since we are mapping to a reflexive sheaf,
it suffices to define this map outside the subset
$X^{\sing}\cup f(Y^{\sing})$, which has codimension
at least 2 in $X$. The map is the usual pullback of differential forms
outside $D_X$. So it suffices to show that the pullback
sends $\Omega^i_X(\log D_X)(-E_X)$ into $\Omega^i_Y(\log D_Y)(-E_Y)$
near the generic point of each component $D_2$ of $f^{-1}(D_X)$.
Let $D_1=f(D_2)$. 

There are four cases.
\begin{description}
\item[Case 1] $D_1$ not in $E_X$,
$e\in k^*$. Then we need to show that $\Omega^i_X(\log D_1)$ pulls back
into $\Omega^i_Y(\log D_2)$ near the generic point of $D_2$.
Let $g$ be a local defining function of $D_1$ in $X$ and $h$ a local defining
function of $D_2$ in $Y$. We have $f^*(g)=h^eu$ for some unit
$u\in O_{Y,D_2}^*$. Then $f^*(\frac{dg}{g})=e\frac{dh}{h}+\frac{du}{u}$,
and so it is clear that forms in
$\Omega^i_X(\log D_1)=\Omega^i_X+\frac{dg}{g}\Omega^{i-1}_X$
pull back to $\Omega^i_Y(\log D_2)=\Omega^i_Y
+\frac{dh}{h}\Omega^{i-1}_Y$ near the generic point of $D_2$.
\item[Case 2] $D_1$ not in $E_X$, $e=0\in k$. Then we need to show
that forms in $\Omega^i_X(\log D_1)$ pull back to $\Omega^i_Y$ near
the generic point of $D_2$. This is clear by the formulas in Case 1,
using that $e=0\in k$.
\item[Case 3] $D_1$ in $E_X$, $e\in k^*$.
Then we need to show that $\Omega^i_X(\log D_1)(-D_1)$ pulls back
into $\Omega^i_Y(\log D_2)(-D_2)$
near the generic point of $D_2$. This follows from Case 1, using
that $g$ pulls back to a unit times a positive power of $h$.
\item[Case 4] $D_1$ in $E_X$, $e=0\in k$.
Then we need to show that $\Omega^i_X(\log D_1)(-D_1)$ pulls back
into $\Omega^i_Y$
near the generic point of $D_2$. This is clear, using that
$\Omega^i_X(\log D_1)(-D_1)$ is contained in $\Omega^i_X$.
\end{description}

Next, we want to define
the {\it pushforward }(or {\it trace}) map
$$f_*\Omega^{[i]}_Y(\log D_Y)(-E_Y) \to \Omega^{[i]}_X(\log D_X)(-E_X).$$
Outside $D_X\cup X^{\sing}\cup f(Y^{\sing})$, this trace map was defined
by Garel and Kunz \cite{Garel}, \cite[section 16]{Kunz},
\cite[Tag 0FLC]{Stacks}. (They assume that the finite morphism $f$
is flat with lci fibers; that holds on the open set mentioned,
since both $X$ and $Y$ are smooth there \cite[Tags 00R4 and 09Q7]{Stacks}.)
Since we are mapping into a reflexive sheaf, it remains
to check that the trace map above is regular
near the generic point of each component $D_1$
of $D_X$. It suffices to check this after replacing $X$ by an elementary
\'etale neighborhood \cite[Tag 02LE]{Stacks}
of the generic point of $D_1$, in such a way 
that $Y$ becomes a disjoint union of varieties containing
the different components $D_2$ of $f^{-1}(D_1)$. (Then the trace
from $Y$ to $X$ is the sum of the traces for these different varieties.)
We can work
on one of these varieties; that is, we can assume that $f^{-1}(D_1)$
is an irreducible divisor $D_2$.

There are four cases.
\begin{description}
\item[Case 1] $D_1$ not in $E_X$,
$e\in k^*$. Then we need to show that the pushforward
of a form in $\Omega^i_Y(\log D_2)$
lies in $\Omega^i_X(\log D_1)$ near the generic point
of $D_1$. We have $\Omega^i_Y(\log D_2)=\Omega^i_Y
+\frac{dh}{h}\Omega^{i-1}_Y$.
Since $f^*(g)=h^eu$ in the notation above,
we have $e\frac{dh}{h}=f^*(\frac{dg}{g})
-\frac{du}{u}$. Since $e\in k^*$, it follows that
$\Omega^i_Y(\log D_2)=\Omega^i_Y+f^*(\frac{dg}{g})\Omega^{i-1}_Y$.
By the projection formula \cite[Tag 0FLC]{Stacks},
the trace of these forms lies in
$\Omega^i_X+\frac{dg}{g}\Omega^{i-1}_X=\Omega^i_X(\log D_1)$.
\item[Case 2] $D_1$ not in $E_X$, $e=0\in k$.
Then we need to show
that forms in $\Omega^i_Y$ push forward
to $\Omega^i_X(\log D_1)$ near
the generic point of $D_1$. This is easy, since the pushforward
is contained in $\Omega^i_X\subset \Omega^i_X(\log D_1)$.
\end{description}

For cases 3 and 4, we use the following property of the trace map
on differential forms \cite[Remark 5.7]{Hubl}:

\begin{proposition}
\label{compatible}
Let $f\colon Y\to X$ be a finite flat morphism with lci fibers
over a field $k$. Suppose that $D_1$ is an irreducible divisor in $X$
such that $D_2:=f^{-1}(D_1)$ is also irreducible. Let $e$
be the ramification degree of $f$ along $D_2$. Then the following diagram
commutes near the generic point of $D_1$:
$$\xymatrix@C-10pt@R-10pt{
\Omega^*_Y \ar[r] \ar[d]^{\tr_f} & \Omega^*_{D_2}\ar[d]^{e\, \tr_f} \\
\Omega^*_X \ar[r] & \Omega^*_{D_1}.
}$$
\end{proposition}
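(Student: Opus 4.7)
The plan is to reduce to a local calculation at the generic point of $D_1$, and then to interpret the commutativity as a compatibility of traces on the scheme-theoretic fiber $Y_1:=Y\times_X D_1$.

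I would first localize $X$ at the generic point of $D_1$: we may assume $X=\mathrm{Spec}(R)$ with $R$ a DVR with uniformizer $g$, and $Y=\mathrm{Spec}(S)$ for a finite flat local lci $R$-algebra $S$ (local because $D_2$ is irreducible). Choose a parameter $h$ along $D_2$ in $S$ and write $f^*g=h^e u$ with $u\in S^*$. Then $Y_1=\mathrm{Spec}(S/gS)$ is a finite flat lci cover of $D_1$ with $(Y_1)_{\mathrm{red}}=D_2$, and $S/gS$ is a free $\kappa(D_2)$-module with basis $1,h,\ldots,h^{e-1}$.

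The strategy is to factor both paths in the square through $Y_1$. The pullback of $\tr_f$ along the closed immersion $D_1\hookrightarrow X$ is the trace $\tr_{f_1}$ for the base change $f_1\colon Y_1\to D_1$, while the restriction $\Omega^*_{Y/k}\to\Omega^*_{D_2/k}$ factors through $\Omega^*_{Y_1/k}$. The commutativity then reduces to the single identity
$$\tr_{f_1}\;=\;e\cdot\bigl(\tr_{f|_{D_2}}\circ r\bigr)$$
on $\Omega^*_{Y_1/k}$, where $r\colon\Omega^*_{Y_1/k}\twoheadrightarrow\Omega^*_{D_2/k}$ is the natural surjection coming from $D_2\hookrightarrow Y_1$.

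For degree zero this identity is a direct calculation: using the basis $1,h,\ldots,h^{e-1}$ of $S/gS$ as $\kappa(D_2)$-module, multiplication by a constant from $\kappa(D_2)$ has $R/g$-trace equal to $e$ times the field-extension trace, and multiplication by $h$ is nilpotent and so has trace zero. The main obstacle is the case $i>0$: the differentials $dh,\,h\,dh,\ldots,h^{e-1}dh$ survive in $\Omega^*_{Y_1/k}$ because $S/gS$ is non-reduced, and the desired identity becomes a nontrivial statement in the residue calculus of regular differentials. This last step is exactly the content of \cite[Remark 5.7]{Hubl}, which I would cite to complete the proof.
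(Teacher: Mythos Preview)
The paper does not actually prove this proposition: it is quoted from the literature with the citation \cite[Remark 5.7]{Hubl}, and no argument is supplied. Your proposal carries out a sensible localization and handles the degree-zero case by hand, but for the essential content---the compatibility of the Kunz--Garel trace on $i$-forms with restriction to the fiber, i.e.\ your identity $\tr_{f_1}=e\cdot(\tr_{f|_{D_2}}\circ r)$ in positive degree---you end up invoking the very same \cite[Remark 5.7]{Hubl}. Since that reference is precisely the source of the proposition itself, you have not given an independent proof; you have reformulated the assertion and then cited the same result for its hard part. This is not a mathematical gap so much as a circularity: if you wish to \emph{prove} the proposition rather than quote it, you would need to carry out the residue/trace computation for the $h^j\,dh$-terms explicitly, or appeal to a more primitive base-change property of regular differential forms that does not already subsume the statement.
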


In our situation (above), the flatness and lci assumptions of Proposition
\ref{compatible} hold near the generic points of $D_1$ and $D_2$,
because $X$ and $Y$ are smooth there.

\begin{description}
\item[Case 3] $D_1$ in $E_X$, $e\in k^*$.
Then we need to show that all the forms in
$\Omega^i_Y(\log D_2)(-D_2)$ push forward into
$\Omega^i_X(\log D_1)(-D_1)$ near the generic point of $D_1$.
That is, we want to show that the trace of
$h\, \Omega^i_Y+h\frac{dh}{h}\Omega^{i-1}_Y$
is contained in $g\, \Omega^i_X
+g\frac{dg}{g}\Omega^{i-1}_X$.
Proposition \ref{compatible}
gives that the trace of $(h,dh) \Omega^*_Y$ is contained
in $(g,dg) \Omega^*_X$, which proves the claim.
\item[Case 4] $D_1$ in $E_X$, $e=0\in k$. 
Then we need to show that forms in $\Omega^i_Y$
push forward to $\Omega^i_X(\log D_1)(-D_1)$
near the generic point of $D_1$. That is, we want
$\tr_f(\Omega^i_Y)$ to be contained in $(g,dg)\Omega^*_X$.
This follows from Proposition \ref{compatible},
using that $e=0\in k$. Thus we have constructed
the pushforward map in all cases.
\end{description}

We now return to the original morphism $f\colon Y\to X$
(before we restricted to an \'etale neighborhood of $X$).
The composition
of pullback and pushforward is multiplication by $\deg(f)$,
as one can check on an open subset where $X$ and $Y$ are smooth,
by the projection formula: for a form $\alpha$ on $Y$,
$\tr_f(f^*\alpha)=\tr_f(1)\alpha=\deg(f)\alpha$
\cite[Tag 0FLC]{Stacks}.

Tensoring the maps above with $O_X(A)$ and taking double duals, we have
pullback and pushforward maps
$$\Omega^{[i]}_X(\log D_X)(A-E_X)\to f_*(\Omega^{[i]}_Y(\log D_Y)(f^*A-E_Y))
\to \Omega^{[i]}_X(\log D_X)(A-E_X),$$
with composition equal to multiplication by $\deg(f)$, as we want. (Note that
$f^*A$ is a Weil divisor (with integer coefficients), because
$f$ is a finite surjective morphism between normal varieties.)
\end{proof}

\section{Endomorphisms and log Bott vanishing}

Kawakami and I showed that a projective variety with a suitable
endomorphism must satisfy Bott vanishing \cite[Theorem C]{KTot}.
(The endomorphism is assumed to be int-amplified and of degree invertible
in the base field.)
We now prove a logarithmic generalization, when the endomorphism
has a totally
invariant divisor. This generalizes Fujino's work
on the case of toric varieties \cite[Theorem 1.3]{Fujinolog}.
Indeed, every toric variety has an action of the multiplicative
monoid of positive integers,
and every toric divisor is totally invariant under
those endomorphisms.

\begin{theorem}
\label{endo-log-Bott}
Let $X$ be a normal projective variety over
a perfect field $k$.
Suppose that $X$ admits an int-amplified endomorphism $f$
whose degree is invertible in $k$. Let $D$ be a reduced divisor
on $X$ such that $f^{-1}(D)\subset D$. Then $(X,D)$ satisfies
log Bott vanishing. That is,
$$H^j(X,\Omega^{[i]}_X(\log D)(A-E))=0$$
for every reduced divisor $E$ with $0\leq E\leq D$,
$i\geq 0$, $j>0$, and $A$ an ample Weil divisor.
\end{theorem}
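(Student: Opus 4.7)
My strategy is to apply Lemma \ref{trace} to the iterates $f^n \colon X \to X$, use the fact that the composition pullback--trace is multiplication by $\deg(f)^n \in k^*$ to realize $H^j(X, \Omega^{[i]}_X(\log D)(A-E))$ as a direct summand of a cohomology group with a much more positive twist, and then invoke a Fujita-type vanishing theorem.

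Fix $i \geq 0$, $j > 0$, a reduced divisor $E$ with $0 \leq E \leq D$, and an ample Weil divisor $A$. For each $n \geq 1$, I apply Lemma \ref{trace} to $f^n \colon X \to X$ with $D_X = D$ and $E_X = E$. Since $f^{-1}(D) \subseteq D$, iteration gives $(f^n)^{-1}(D) \subseteq D$, and since $f$ (hence $f^n$) is surjective these two sets coincide. The lemma then produces reduced divisors $D_n \leq D$ and $E_n \leq D_n$ (the components of $D$, respectively of the set-theoretic preimage of $E$, along which the $f^n$-ramification degree lies in $k^*$) and morphisms of reflexive sheaves
$$\Omega^{[i]}_X(\log D)(A-E) \longrightarrow (f^n)_*\Omega^{[i]}_X(\log D_n)\bigl((f^n)^*A - E_n\bigr) \longrightarrow \Omega^{[i]}_X(\log D)(A-E),$$
whose composition is multiplication by $\deg(f)^n \in k^*$. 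Since $f^n$ is affine, passing to $H^j$ exhibits $H^j(X, \Omega^{[i]}_X(\log D)(A-E))$ as a direct summand of $H^j(X, \Omega^{[i]}_X(\log D_n)((f^n)^*A - E_n))$.

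Because $D$ has only finitely many irreducible components, as $n$ varies the pairs $(D_n, E_n)$ range over the finite set of pairs of reduced divisors $(D', E')$ with $E' \leq D' \leq D$, so the coherent sheaves $\Omega^{[i]}_X(\log D_n)(-E_n)$ assume only finitely many isomorphism classes. The int-amplified hypothesis on $f$ implies that every eigenvalue of $f^* \colon N^1(X) \to N^1(X)$ has absolute value strictly greater than $1$ \cite{Meng-amplified}; consequently, for any fixed ample class $H$, the class $(f^n)^*A - H$ is ample for all $n \gg 0$. Applying a Fujita-type vanishing theorem to each of the finitely many coherent sheaves above, we obtain $H^j(X, \Omega^{[i]}_X(\log D_n)((f^n)^*A - E_n)) = 0$ for $n$ sufficiently large, and hence the desired vanishing of $H^j(X, \Omega^{[i]}_X(\log D)(A-E))$.

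The main technical point is this last step: the twisting class $(f^n)^*A - E_n$ is an ample Weil, not necessarily Cartier, divisor, so the usual formulation of Fujita vanishing does not apply literally. This can be handled either by citing a version of Fujita vanishing valid for reflexive twists on normal projective varieties, or by working on a suitable resolution of singularities and descending. Once Lemma \ref{trace} is granted, the remainder of the argument is bookkeeping.
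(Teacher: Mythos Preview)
Your proposal is correct and follows essentially the same route as the paper: apply Lemma~\ref{trace} to iterates of $f$, use invertibility of $\deg(f)$ to get a split injection on cohomology, push the twist to be arbitrarily positive via int-amplifiedness, and finish with Fujita vanishing for Weil divisors (the paper cites \cite[Lemma~2.6]{KTot} for exactly the point you flag at the end).

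The one substantive difference is that the paper avoids your pigeonhole over the finitely many pairs $(D_n,E_n)$. After replacing $f$ by an iterate so that each irreducible component of $D$ is mapped to itself, the paper observes that $\deg(f)$ factors as the ramification degree along a component times the degree of $f$ restricted to that component; hence every ramification degree along $D$ is invertible in $k$, and Lemma~\ref{trace} returns $D_Y=D$ and $E_Y=E$ on the nose. This collapses your family of sheaves to a single one and makes the Fujita step cleaner. Your finiteness argument is a perfectly valid substitute, just slightly less economical. (You should also say explicitly that $f$ is finite before invoking Lemma~\ref{trace}; this follows immediately from int-amplifiedness, as the paper notes.)
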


It may seem artificially strong to assume that an endomorphism
has a totally invariant divisor. But in fact, this property
comes up naturally in classifying varieties
with endomorphisms. See \cite{MZZ} or the proof
of Theorem \ref{endo-3-fold}.

Theorem \ref{endo-log-Bott}
is the most general vanishing property I know how to prove,
for a variety with an endomorphism.
The idea of allowing
any divisor $0\leq E\leq D$ was suggested by Fujino's result
for toric varieties \cite[Theorem 1.3]{Fujinolog}.

\begin{remark}
Bott vanishing can fail if we only assume that $f$
is int-amplified and separable (rather than of degree invertible in $k$),
by \cite[Proposition 5.1]{KTot}. A fortiori, log Bott vanishing
can fail in that situation. (Bott vanishing clearly fails
for inseparable endomorphisms, since {\it every }projective
variety over a finite
field has the Frobenius endomorphism, which is int-amplified.)
\end{remark}

\begin{proof}
(Theorem \ref{endo-log-Bott})
Since $f$ is int-amplified, there is an ample Cartier divisor $H$
on $X$ such that $f^*H-H$ is ample. In particular, $f^*H$
is ample, and so $f$ does not contract any curves. So
$f\colon X\to X$ is finite. Since $f^{-1}(D)\subset D$
and $f$ is surjective, we have $f^{-1}(D)=D$.

Since $f^{-1}(D)=D$, $f$ permutes the (finitely many) irreducible
components of $D$. After replacing $f$ by a positive iterate, we can assume
that $f$ maps each component
of $D$ to itself. So $f^{-1}(D_1)=D_1$, for each component $D_1$
of $D$. In particular, since $E$ is a reduced divisor
with $0\leq E\leq D$, we now have that $f^{-1}(E)=E$.
(In what follows, we use only that $f^{-1}(E)=E$, not that
$f$ maps each component of $D$ to itself. This makes the proof
clearer, in terms of notation.)

We are given that the degree of $f$ is invertible in $k$. The inverse
image of each irreducible component $D_1$ of $D$ is a single
irreducible component $D_2$ of $D$.
Therefore, the degree of $f\colon X\to X$
is the product of the degree of $f\colon D_2\to D_1$
and the ramification degree of $f$ along $D_2$. It follows that this
ramification degree is invertible in $k$ and that $k(D_2)$
is a finite separable extension of $k(D_1)$ via $f$. That is, $f$
is tamely ramified over each component of $D$.

Let $A$ be an ample Weil divisor on $X$.
By Lemma \ref{trace}, we have
pullback and pushforward maps
$$\Omega^{[i]}_X(\log D)(A-E)\to f_*(\Omega^{[i]}_X(\log D)(f^*A-E))
\to \Omega^{[i]}_X(\log D)(A-E),$$
with composition equal to multiplication by $\deg(f)$.
(Note that
$f^*A$ is a Weil divisor (with integer coefficients), because
$f$ is a finite surjective morphism between normal varieties.)

Given this, the proof of \cite[Theorem C]{KTot} works. Namely,
since $\deg(f)$ is invertible in $k$, it follows that
the pullback map is split injective as a map of $O_X$-modules. Let $j>0$.
Taking cohomology (and using that $f$ is finite), it follows
that $H^j(X,\Omega^{[i]}_X(\log D)(A-E))\to H^j(X,\Omega^{[i]}_X
(\log D)(f^*A-E))$ is split injective. The same argument works
for any iterate $f^e$ with $e\geq 1$.

Using that $f$ is int-amplified, we showed that
the iterates $(f^e)^*(A)$ become arbitrarily large in the ample
cone of $X$, as $e$ goes to infinity \cite[proof of Theorem C]{KTot}.
By Fujita vanishing for Weil divisors \cite[Lemma 2.6]{KTot},
it follows that there is an $e\geq 1$ such that $H^j(X,\Omega^{[i]}_X
(\log D)((f^e)^*A-E))=0$. By the previous paragraph,
we have $H^j(X,\Omega^{[i]}_X(\log D)(A-E))=0$,
as we want.
\end{proof}

\section{Morphisms}

Let $X$ and $Y$ be projective varieties of the same dimension
with Picard number 1.
If $X$ does not satisfy Bott vanishing, then there is an upper bound
on the degrees of all morphisms $Y\to X$ with degree
invertible in $k$ \cite[Proposition 2.7]{KTot}.
We now prove a log version of that result.

\begin{theorem}
\label{morphism-log-Bott}
Let $X$ and $Y$ be normal projective varieties of the same dimension
over a perfect field $k$. Assume that $X$ and $Y$ have Picard number 1.
Let $D_X\subset X$ and $B_Y\subset Y$
be reduced divisors. Suppose that there are morphisms
$f\colon Y\to X$ of arbitrarily high degree such that
the degree is invertible in $k$ and $f^{-1}(D_X)\subset B_Y$.
Then $(X,D_X)$ satisfies log Bott vanishing. That is, we have
$$H^j(X,\Omega^{[i]}_X(\log D_X)(A-E))=0$$
for every reduced divisor $0\leq E\leq D_X$, $i\geq 0$,
$j>0$, and $A$ an ample Weil divisor on $X$.
\end{theorem}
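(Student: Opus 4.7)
The plan is to mimic the proof of Theorem~\ref{endo-log-Bott}, using Lemma~\ref{trace} to realize $H^j(X,\Omega^{[i]}_X(\log D_X)(A-E))$ as a direct summand of a cohomology group on $Y$, and then to annihilate that group via Fujita vanishing once $\deg(f)$ is large enough. The extra subtlety compared to the endomorphism case is that $f$ need not be tamely ramified over $D_X$, so the log divisor $D_Y$ produced by Lemma~\ref{trace} may be a proper subset of $f^{-1}(D_X)$; I will handle this by using the hypothesis $f^{-1}(D_X)\subset B_Y$ to see that there are only finitely many possibilities for $D_Y$ and $E_Y$.

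Fix a reduced divisor $0\leq E\leq D_X$, integers $i\geq 0$ and $j>0$, and an ample Weil divisor $A$ on $X$. For each morphism $f\colon Y\to X$ as in the hypothesis, Lemma~\ref{trace} produces reduced divisors $D_Y\subset f^{-1}(D_X)$ and $E_Y\subset D_Y$ together with pullback and pushforward maps
$$\Omega^{[i]}_X(\log D_X)(A-E)\to f_*\bigl(\Omega^{[i]}_Y(\log D_Y)(f^*A-E_Y)\bigr)\to\Omega^{[i]}_X(\log D_X)(A-E)$$
whose composition is multiplication by $\deg(f)$. Since $f^{-1}(D_X)\subset B_Y$, both $D_Y$ and $E_Y$ are supported on $B_Y$, so as $f$ varies the pair $(D_Y,E_Y)$ ranges over the finite set of pairs of reduced subdivisors of $B_Y$. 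Since $\deg(f)\in k^*$, the pullback map is split injective, and since $f$ is finite, taking cohomology realises $H^j(X,\Omega^{[i]}_X(\log D_X)(A-E))$ as a direct summand of $H^j(Y,\Omega^{[i]}_Y(\log D_Y)(f^*A-E_Y))$.

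Next I would use the Picard number one hypothesis on $Y$ to show that $f^*A$ becomes arbitrarily large in $N^1(Y)$ as $\deg(f)\to\infty$. Fix an ample Cartier divisor $H_Y$ on $Y$; then $f^*A$ is numerically equivalent to $c(f)\,H_Y$ for some $c(f)>0$, and comparing top self-intersections gives $c(f)^n\cdot H_Y^n=(f^*A)^n=\deg(f)\cdot A^n$, where $n=\dim Y=\dim X$, so $c(f)\to\infty$ with $\deg(f)$. For each of the finitely many pairs $(D',E')$ of reduced subdivisors of $B_Y$, Fujita vanishing for Weil divisors \cite[Lemma 2.6]{KTot} applied to the coherent sheaf $\Omega^{[i]}_Y(\log D')(-E')$ produces a threshold beyond which any Weil-divisor twist whose numerical class is a sufficient positive multiple of $[H_Y]$ kills $H^j$. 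Taking the maximum threshold over the finitely many pairs $(D',E')$ and then choosing $f$ with $c(f)$ above it delivers the desired vanishing.

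I expect the main obstacle to be the uniform application of Fujita vanishing: one must check that the Weil-divisor version of \cite[Lemma 2.6]{KTot} can be invoked along an entire ray in $N^1(Y)$ (so that only the numerical class of $f^*A$ matters, not $f^*A$ itself), and simultaneously that the hypothesis $f^{-1}(D_X)\subset B_Y$ genuinely forces $(D_Y,E_Y)$ into a finite list of candidates independent of $f$. Once these two points are secured, the remainder of the argument runs in parallel with Theorem~\ref{endo-log-Bott}, with $Y$ replacing $X$ and the family $\{f\}$ of morphisms of unbounded degree playing the role of the iterates of a single endomorphism.
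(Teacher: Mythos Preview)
Your proposal is correct and follows essentially the same route as the paper; the only cosmetic difference is that the paper passes to a subsequence of the $f$'s so that $(D_Y,E_Y)$ is constant, rather than taking a maximum Fujita threshold over the finitely many candidate pairs. One small omission worth patching: you invoke Lemma~\ref{trace} and the finiteness of $f$ without justification, but this is immediate from the Picard number~1 hypothesis on $Y$ (the pullback of an ample Cartier divisor on $X$ is a positive multiple of the ample generator of $N^1(Y)$, hence ample, so $f$ contracts no curves).
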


The proof of Theorem \ref{morphism-log-Bott}
shows that one can drop the assumption
that $X$ and $Y$ have Picard number 1 if one replaces
``$f$ of arbitrarily high degree'' by ``$f^*H$ arbitrarily large
in the ample cone of $Y$'', for a fixed ample Cartier divisor
$H$ on $X$.

\begin{proof}
Since each morphism $f\colon Y\to X$ has degree invertible
in $k$, the degree is positive.
Since $Y$ has Picard number 1,
the pullback of an ample Cartier divisor on $X$ is ample on $Y$,
and so $f$ is finite.

For each morphism $f\colon Y\to X$, let $D_Y$ be the sum of the components
of $f^{-1}(D_X)$ along which the ramification degree of $f$
is invertible in $k$, and likewise define $E_Y\subset f^{-1}(E_Y)$.
Since $D_Y$ and $E_Y$ are contained inside the fixed divisor $B_Y$,
we can assume (after passing to a subsequence of the morphisms $f$)
that $D_Y$ and $E_Y$ are the same for all the morphisms
$f\colon Y\to X$ that we consider.

Let $A$ be an ample Weil divisor on $X$. By Lemma \ref{trace},
we have pullback and pushforward maps
$$\Omega^{[i]}_X(\log D_X)(A-E)\to f_*(\Omega^{[i]}_Y(\log D_Y)(f^*A-E_Y))
\to \Omega^{[i]}_X(\log D_X)(A-E).$$
The composition is multiplication by $\deg(f)$, and so the pullback
map is split injective. It follows that $H^j(X,\Omega^{[i]}_X(\log D_X)(A-E))$
injects into $H^j(Y,\Omega^{[i]}_Y(\log D_Y)(f^*A-E_Y))$.
Since we have morphisms $f$ of arbitrarily large degree,
$f^*A$ becomes arbitrarily large in the ample cone of $Y$
(here just one ray). Therefore, for $f$ of sufficiently large degree,
Fujita vanishing for Weil divisors gives that
$H^j(Y,\Omega^{[i]}_Y(\log D_Y)(f^*A-E_Y))$ is equal to zero
\cite[Lemma 2.6]{KTot}.
It follows that $H^j(X,\Omega^{[i]}_X(\log D_X)(A-E))=0$,
as we want.
\end{proof}

\section{Images of toric varieties: log Bott vanishing}

A projective variety that is an image of a toric variety
(by a morphism of degree invertible in $k$)
must satisfy Bott vanishing \cite[Proposition 3.10]{KTot}.
We now prove a log version of that result.

\begin{theorem}
\label{toric-log-Bott}
Let $X$ be a normal projective variety
over a perfect field $k$. Suppose that there is a morphism $f$
from a proper toric variety $Y$ onto $X$.
If $Y\to Y_1\to X$ is the Stein factorization of $f$,
assume that the degree of $Y_1\to X$ is invertible in $k$.
Let $D_X\subset X$ be a reduced divisor such that
$f^{-1}(D_X)$ is a union of toric divisors. 
Then $(X,D_X)$ satisfies log Bott vanishing. That is, we have
$$H^j(X,\Omega^{[i]}_X(\log D_X)(A-E))=0$$
for every reduced divisor $0\leq E\leq D_X$, $i\geq 0$,
$j>0$, and $A$ an ample Weil divisor on $X$.
\end{theorem}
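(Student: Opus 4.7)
The plan is to transfer log Bott vanishing from the toric variety $Y$ down to $X$ through the Stein factorization $Y\xrightarrow{\pi}Y_1\xrightarrow{g}X$, paralleling the non-log Proposition~3.10 of \cite{KTot}. The finite piece $g$ is handled by Lemma~\ref{trace}; the contractive piece $\pi$ is handled using $\pi_*O_Y=O_{Y_1}$ together with Fujino's log Bott vanishing for toric varieties.

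First I apply Lemma~\ref{trace} to the finite surjective morphism $g\colon Y_1\to X$, whose degree is invertible in $k$. Let $D_{Y_1}$ be the sum of those components of $g^{-1}(D_X)$ along which the ramification of $g$ is invertible in $k$, and let $E_{Y_1}\subset g^{-1}(E)$ be the analogous reduced divisor. Lemma~\ref{trace} then produces pullback and pushforward maps between $\Omega^{[i]}_X(\log D_X)(A-E)$ and $g_*\Omega^{[i]}_{Y_1}(\log D_{Y_1})(g^*A-E_{Y_1})$ whose composition is multiplication by $\deg(g)$, a unit in $k$. Hence the pullback is split injective, yielding a split injection
$$H^j\bigl(X,\Omega^{[i]}_X(\log D_X)(A-E)\bigr)\hookrightarrow H^j\bigl(Y_1,\Omega^{[i]}_{Y_1}(\log D_{Y_1})(g^*A-E_{Y_1})\bigr),$$
so it suffices to show the right-hand group vanishes for every $j>0$.

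Next I exploit the contraction $\pi\colon Y\to Y_1$, which satisfies $\pi_*O_Y=O_{Y_1}$. After replacing $Y$ by an equivariant toric resolution if necessary (this leaves $Y_1$ unchanged, since a toric resolution of a normal $Y$ has connected fibres), I may assume $Y$ is smooth projective toric. Then $D_Y:=f^{-1}(D_X)$ is a reduced union of toric divisors, and Fujino's theorem \cite[Theorem~1.3]{Fujinolog} gives log Bott vanishing for $(Y,D_Y)$. Combining $\pi_*O_Y=O_{Y_1}$, the projection formula, and the reflexive pullback of log differentials $\pi^{[*]}\Omega^{[i]}_{Y_1}(\log D_{Y_1})\hookrightarrow\Omega^{[i]}_Y(\log D_Y)$, I plan to inject
$$H^j\bigl(Y_1,\Omega^{[i]}_{Y_1}(\log D_{Y_1})(g^*A-E_{Y_1})\bigr)\hookrightarrow H^j\bigl(Y,\Omega^{[i]}_Y(\log D_Y)(\pi^*g^*A-E_Y)\bigr)$$
for a suitable reduced $E_Y\le D_Y$ covering $E_{Y_1}$, and then deduce the vanishing of the right-hand side from Fujino's theorem after a Fujita-vanishing perturbation.

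The hardest step is the contraction. Unlike the finite $g$, the contraction $\pi$ admits no trace splitting, so the injection on cohomology must be extracted from $\pi_*O_Y=O_{Y_1}$, a Leray spectral sequence argument, and a careful comparison of reflexive log differentials on $Y_1$ (which need not be smooth or toric) with those on $Y$. A second delicate point is that when $\pi$ has positive-dimensional fibres, $\pi^*g^*A=f^*A$ is only nef on $Y$, not ample, so applying Fujino's ampleness-type vanishing requires a Fujita-vanishing perturbation using an ample class from $Y_1$, as in \cite[Lemma~2.6]{KTot}. Matching the reduced twist $-E_{Y_1}$ with the correct $-E_Y$ also requires a ramification-case analysis analogous to that inside the proof of Lemma~\ref{trace}.
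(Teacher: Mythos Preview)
You have missed the key observation that makes the theorem easy: the Stein factor $Y_1$ is itself a toric variety. Every contraction of a toric variety is toric (this is \cite[Proposition 2.7]{Tanakakv}, quoted in the paper). Once you know this, the entire ``contraction step'' in your proposal is unnecessary: you simply discard $Y$, work with the finite morphism $g\colon Y_1\to X$, and apply Fujino's log Bott vanishing directly to the toric variety $Y_1$. Since $g$ is finite, $g^*A$ is genuinely ample on $Y_1$, and since $f^{-1}(D_X)$ is a union of toric divisors on $Y$ and $\pi$ is a toric morphism, $g^{-1}(D_X)$ is a union of toric divisors on $Y_1$; hence so are $D_{Y_1}$ and $E_{Y_1}$, and Fujino's theorem applies on the nose. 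That is exactly the paper's proof.

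By contrast, the contraction step you sketch has real problems. There is no trace splitting for $\pi$, as you note, and a Leray argument does not by itself give an injection of $H^j(Y_1,\mathcal{F})$ into $H^j(Y,\pi^{[*]}\mathcal{F})$ for a reflexive sheaf $\mathcal{F}$ that is not locally free; you would need control of $R^{>0}\pi_*$ applied to the reflexive pullback, not to $O_Y$. Even granting the injection, $f^*A=\pi^*g^*A$ is only nef on $Y$, so Fujino's ampleness hypothesis fails, and the ``Fujita perturbation'' you invoke does not help: Fujita vanishing lets you absorb a fixed sheaf into a sufficiently ample twist, but here the nef class is fixed and you are not free to enlarge it. All of these difficulties evaporate once you replace $Y$ by $Y_1$.
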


It may seem unrealistically strong to assume that $f^{-1}(D_X)$
is a union of toric divisors. Nonetheless,
the proof of Theorem \ref{image-3-fold} shows
how Theorem \ref{toric-log-Bott} can be used for classifying
images of toric varieties.

\begin{proof}
Every contraction of a toric variety is toric
\cite[Proposition 2.7]{Tanakakv}. (This was known earlier for projective
toric varieties \cite[Theorem 6.28
and exercise 7.2.3]{CLS}.)
So, replacing $Y$ by $Y_1$, we can assume that the surjection
$f\colon Y\to X$ is finite. By our assumptions, the degree
of $f$ is invertible in $k$. Let $D_Y$ be the sum of the
components of $f^{-1}(D_X)$ along which the ramification
degree $e$ of $f$ is invertible in $k$. 
Likewise, let $E_Y$ be the sum of the components of $f^{-1}(E)$
along which the ramification degree of $f$ is invertible in $k$.
By our assumptions, $D_Y$ and $E_Y$ are sums of toric divisors.

Let $A$ be an ample Weil divisor on $X$. By Lemma \ref{trace},
we have pullback and pushforward maps
$$\Omega^{[i]}_X(\log D_X)(A-E)\to f_*(\Omega^{[i]}_Y(\log D_Y)(f^*A-E_Y))
\to \Omega^{[i]}_X(\log D_X)(A-E),$$
with composition equal to multiplication by $\deg(f)$. 
Since $\deg(f)$ is invertible in $k$, it follows that
the pullback map is split injective as a map of $O_X$-modules. Let $j>0$.
Taking cohomology (and using that $f$ is finite), it follows
that $H^j(X,\Omega^{[i]}_X(\log D_X)(A-E))\to H^j(Y,\Omega^{[i]}_Y
(\log D_Y)(f^*A-E_Y))$ is split injective.
Since $f^*A$
is ample on $Y$, the latter cohomology group is zero
by log Bott vanishing for toric varieties \cite[Theorem 1.3]{Fujinolog}.
(Alternatively, this follows from Theorem \ref{endo-log-Bott},
using the multiplication maps on a toric variety.) It follows
that $H^j(X,\Omega^{[i]}_X(\log D_X)(A-E))=0$,
as we want.
\end{proof}

\section{Endomorphisms of del Pezzo surfaces
and Fano 3-folds}

\begin{theorem}
\label{endo-3-fold}
Let $X$ be a smooth Fano 3-fold over an algebraically closed
field $k$. 
If $X$ has an int-amplified endomorphism of degree
invertible in $k$, then $X$ is toric.
\end{theorem}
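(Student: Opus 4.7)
The plan is to combine Theorem \ref{endo-log-Bott} with an $f$-equivariant MMP and the known Picard number~1 case from \cite[Theorem A]{KTot}. First I would produce a large totally invariant divisor. By the Meng-Zhang theory of totally invariant divisors on Fano varieties with int-amplified endomorphisms (cf.\ \cite{MZZ}), after replacing $f$ by a positive iterate one can arrange that there exists a reduced $f$-totally invariant divisor $D$ on $X$ with $K_X+D\sim 0$, so that $(X,D)$ is a log Calabi-Yau pair. Applying Theorem \ref{endo-log-Bott} to this pair then yields log Bott vanishing
$$H^j\bigl(X,\Omega^{[i]}_X(\log D)(A-E)\bigr)=0$$
for every ample Weil divisor $A$, every reduced $0\le E\le D$, and all $i\ge 0$, $j>0$.

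Next I would run an $f$-equivariant $K_X$-MMP on $X$. After iterating $f$ further if needed, each divisorial contraction, flip, and Mori fiber contraction can be made $f$-equivariant, and $D$ descends to a totally invariant divisor at each stage (with $K+D\sim 0$ preserved). The terminal output is a Mori fiber space $X'\to T$, which on a smooth Fano 3-fold falls into three cases: $T$ is a point (Picard number 1, resolved by \cite[Theorem A]{KTot}); $T=\P^1$ (del Pezzo fibration); or $T$ is a smooth rational surface (conic bundle). In the fibration cases, $f$ induces an int-amplified endomorphism on $T$ together with a totally invariant divisor, so by dimension induction (smooth del Pezzo surfaces and $\P^1$ with such endomorphisms are toric by earlier work in \cite{KTot}) the base $T$ is toric, and similarly the general fiber is toric. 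Log Bott vanishing for $(X',D')$ combined with toric structure on both base and general fiber should force $X'$ itself to be toric; then one pulls toric structure back through the chain of $f$-equivariant birational maps to deduce that $X$ is toric.

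The main obstacle will be the last two steps: showing (i)~that a Mori fiber space with toric base, toric general fiber, totally invariant horizontal divisor, and log Bott vanishing is globally toric, and (ii)~that each birational step in the $f$-equivariant MMP preserves toricity. For (i), the idea is that log Bott vanishing applied to relative differentials kills any deformation of the toric fiber, making the family locally trivial with a torus-invariant section coming from $D$. For (ii), each $f$-equivariant divisorial contraction between smooth 3-folds is the blowup of a smooth curve or point, and $f$-invariance together with $D$-invariance should force this center to be torus-invariant, so that the blowup remains toric. The crucial feature of this approach over the Meng-Zhang-Zhong argument in characteristic~$0$ is that it never invokes the Mori-Mukai classification, proceeding instead purely through equivariant MMP and log Bott vanishing; this is precisely what extends the theorem to characteristic $p$ with $\deg(f)$ prime to $p$.
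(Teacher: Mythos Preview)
Your strategy is quite different from the paper's, and unfortunately the steps you flag as ``obstacles'' are genuine gaps, not just technicalities. The paper does \emph{not} avoid the Mori--Mukai classification; it relies on it essentially. The argument is: Bott vanishing cuts the list of smooth Fano 3-folds down to the toric ones plus 19 others (Table \ref{nontoric}), and then each of those 19 is eliminated case by case---most by showing that some contraction fails Bott vanishing (Lemma \ref{bott-contraction}), and the rest by applying log Bott vanishing to a totally invariant discriminant locus of a fibration (Lemma \ref{fibers} plus Theorem \ref{endo-log-Bott}) or by direct computation (Lemma \ref{nodal}, Proposition \ref{endosurface}). What makes this work in characteristic $p$ is Tanaka's theorem that the classification has the same shape there, not an avoidance of classification.

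Your approach already founders at the first step: the existence of a totally invariant reduced $D$ with $K_X+D\sim 0$ is not a preliminary fact one can simply quote from \cite{MZZ}. In characteristic zero it is essentially equivalent to the toricity theorem itself (via complexity-zero log Calabi--Yau pairs), and in characteristic $p$ no such statement is known. Even granting $D$, your steps (i) and (ii) carry the real content and are not filled in. Log Bott vanishing is a cohomology-vanishing condition, not a rigidity or splitting theorem for families; the claim that it ``kills any deformation of the toric fiber'' has no evident mechanism, and a Mori fiber space with toric base and toric general fiber need not be toric. Likewise, an $f$-equivariant blow-up center need not be invariant under any torus acting on the target, so there is no way to lift a toric structure back along the MMP. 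The paper resorts to classification precisely because no structural argument of the kind you sketch is currently available.
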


The proof uses Tanaka's theorem that the classification of smooth
Fano 3-folds has essentially the same form
in every characteristic \cite[Theorem 1.1]{Tanaka4},
\cite[Theorem 1.1]{Tanaka2}.
Without that, our proof applies to Fano 3-folds in any characteristic
that are given by the same construction
as one of the Fano 3-folds over $\C$
(classified by Iskovskikh and Mori-Mukai).

In characteristic zero, Theorem \ref{endo-3-fold}
was proved earlier by Meng, Zhang, and Zhong \cite[Theorem 1.4]{MZZ}.
Here we give a new proof which works in any characteristic.
For Fano 3-folds with Picard number 1, this is
\cite[Theorem A]{KTot}. We also prove an analogous
result for del Pezzo surfaces (Proposition \ref{endosurface}).

To prove Theorem \ref{endo-3-fold},
our basic idea is to use that $X$ satisfies Bott vanishing,
but that is not enough by itself: among the smooth
complex Fano 3-folds, 18 are toric while 19 others also
satisfy Bott vanishing \cite{TotaroBottFano}. In proving
Theorem \ref{endo-3-fold}, one key point is that
when $X$ has an endomorphism as above,
not only $X$ but also every contraction of $X$
satisfies Bott vanishing (Lemma \ref{bott-contraction}).

\begin{proof}
(Theorem \ref{endo-3-fold})
Assume that $X$ has an int-amplified endomorphism of degree
invertible in $k$. Then $X$ satisfies Bott vanishing
\cite[Theorem C]{KTot}. Among all smooth Fano 3-folds,
exactly 19 non-toric Fano 3-folds (up to isomorphism)
satisfy Bott vanishing
\cite[Theorem 0.1]{TotaroBottFano}. In Mori-Mukai's
numbering \cite{MM,MMerratum,IP,Fanography},
these are (2.26), (2.30), (3.15)--(3.16),
(3.18)--(3.24), (4.3)--(4.8), (5.1), and (6.1).
(To be precise, the answer is a subset of this in characteristic 2,
where only 9 non-toric Fano 3-folds on the known list
satisfy Bott vanishing \cite[section 2]{TotaroBottFano}.)
We need to show that none of these 19 varieties has an int-amplified
endomorphism $f$ of degree invertible in $k$.
Table \ref{nontoric} describes these 19 varieties, using information
from Mori-Mukai or the web site
Fanography \cite{MM, Fanography}. In the table, $V_5$ is the smooth quintic
del Pezzo 3-fold $\Gr(2,5)\cap\P^6\subset \P^9$, $Q$ is the smooth
quadric 3-fold, $W$ is the flag manifold $GL(3)/B$ (or equivalently,
a smooth divisor of degree $(1,1)$ in $\P^2\times \P^2$),
(3.17) is a smooth divisor of degree $(1,1,1)$ in $\P^1\times \P^1
\times \P^2$, and $S_5$ is the quintic del Pezzo surface.
\begin{table}
\centering
\begin{tabular}{c|p{0.8\textwidth}}
Fano 3-fold & Description \\
\hline
(2.26) & the blow-up of $V_5\subset\P^6$ along a general line in it \\
(2.30) & the blow-up of $Q\subset\P^4$ at a point \\
(3.15) & the blow-up of $Q$ along a disjoint line and conic \\
(3.18) & the blow-up of $Q$ along a conic and then along a fiber
in the exceptional divisor \\
(3.19) & the blow-up of $Q$ at 2 non-collinear points \\
(3.20) & the blow-up of $Q$ along 2 disjoint lines \\
(3.23) & the blow-up of $Q$ along a line and then along a fiber
of the exceptional divisor \\
(4.4) & the blow-up of $Q$ along a conic and then along 2 fibers
of the exceptional divisor \\
(5.1) & the blow-up of $Q$ along a conic and then along 3 fibers
of the exceptional divisor \\
(3.16) & the blow-up of $W\subset \P^2\times \P^2$ along a curve
of degree $(2,1)$ \\
(3.24) & the blow-up of $W$ along a curve of degree $(1,0)$ \\
(4.7) & the blow-up of $W$ along disjoint $(1,0)$ and $(0,1)$ curves \\
(4.3) & the blow-up of (3.17) in $\P^1\times \P^1\times \P^2$
along a curve of degree $(1,1,0)$ \\
(3.21) & the blow-up of $\P^1\times \P^2$ along a curve
of degree $(2,1)$ \\
(3.22) & the blow-up of $\P^1\times \P^2$ along a curve
of degree $(0,2)$ \\
(4.5) & the blow-up of $\P^1\times \P^2$ along disjoint 
$(2,1)$ and $(1,0)$ curves \\
(4.6) & the blow-up of $\P^3$ along 3 disjoint lines \\
(4.8) & the blow-up of $(\P^1)^3$ along a curve of degree
$(0,1,1)$ \\
(6.1) & $\P^1\times S_5$ \\
\end{tabular}
\caption{The 19 non-toric Fano 3-folds that satisfy Bott vanishing}
\label{nontoric}
\end{table}

For each of these 19 Fano 3-folds,
the nef cone of $X$ is the same as the (known)
nef cone in characteristic zero. In particular,
it is rational polyhedral. (The nef cone for every smooth
complex Fano 3-fold is given in \cite{CCGK}. The nef cone
was re-computed for the 19 varieties above
in any characteristic, yielding the same result
\cite{TotaroBottFano}. The argument also shows that all nef divisors
are semi-ample, and hence the contractions
of these varieties have the same description in every characteristic.)

\begin{lemma}
\label{bott-contraction}
Let $X$ be a normal projective variety over a perfect
field $k$. Suppose that $X$ has only finitely many contractions
(for example, this holds if $X$ is a Mori dream space,
or if the nef cone is rational polyhedral).
Assume that $X$ has an int-amplified endomorphism of degree
invertible in $k$. Then:
\begin{enumerate}[(i)]
\item Every contraction of $X$
admits an int-amplified endomorphism
of degree invertible in $k$.
\item Every contraction of $X$
satisfies Bott vanishing for ample Weil divisors.
\end{enumerate}
If we only assume that $X$ has a separable int-amplified endomorphism,
then every contraction of $X$ also has a separable int-amplified
endomorphism.
\end{lemma}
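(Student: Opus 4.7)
The plan is to establish (i) by descending a suitable iterate of $f$ through any given contraction of $X$, to deduce (ii) by applying Theorem C of \cite{KTot} to the resulting endomorphism, and to handle the separable statement by a chain rule. The underlying geometric idea is that $f^*$ acts as a cone automorphism of $\Nef(X)$ whose eigenvalues all have absolute value greater than $1$, so under the finiteness hypothesis some iterate fixes every face of $\Nef(X)$ setwise, and the rigidity lemma then descends that iterate through $\pi$.

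First I would check that $f^*$ is a bijection of $\Nef(X)$. Since $f$ is int-amplified it is finite (as in the proof of Theorem \ref{endo-log-Bott}), so $f^*$ sends ample to ample. By the projection formula, if $D$ is nef and $C$ is a curve then $(f_*D)\cdot C=D\cdot f^*C\ge 0$ since $f^*C$ is effective, so $f_*$ also preserves nefness; hence $(f^*)^{-1}=(1/\deg f)f_*$ preserves $\Nef(X)$ too and $f^*$ is a cone automorphism. Under the hypothesis (say $\Nef(X)$ rational polyhedral or $X$ a Mori dream space), $\Nef(X)$ has only finitely many faces, so after replacing $f$ by an iterate we may assume $(f^e)^*$ fixes every face of $\Nef(X)$ setwise.

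Now fix a contraction $\pi\colon X\to Y$ and let $F=\pi^*(\Nef Y)\subset \Nef(X)$, a face of $\Nef(X)$ that is fixed by $(f^e)^*$. The dual face $F'\subset\overline{\mathrm{NE}}(X)$ spanned by classes of curves contracted by $\pi$ is then fixed by $(f^e)_*$. For any curve $C$ contracted by $\pi$, $(f^e)_*[C]\in F'$ is a nonnegative multiple of $[f^e(C)]$, so $f^e(C)$ is either a point or a curve contracted by $\pi$; in either case $\pi\circ f^e$ contracts $C$. The rigidity lemma then yields a unique surjective morphism $g\colon Y\to Y$ with $g\circ\pi=\pi\circ f^e$. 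Identifying $N^1(Y)_\R$ with the $(f^e)^*$-invariant subspace $\pi^*N^1(Y)_\R=\mathrm{span}(F)\subset N^1(X)_\R$, the map $g^*$ is the restriction of $(f^e)^*$. By Meng's characterization of int-amplified endomorphisms, all eigenvalues of $(f^e)^*$ have absolute value $>1$, and the same holds on any invariant subspace, so $g$ is int-amplified. Finally, comparing the generic fibers of $g\circ\pi=\pi\circ f^e$ over a general $y\in Y$, the disjoint union $\bigsqcup_{y_i\in g^{-1}(y)}\pi^{-1}(y_i)$ is a degree-$\deg(f^e)$ cover of $\pi^{-1}(y)$ via $f^e$, each piece contributing the same generic degree $M$; hence $\deg(f^e)=\deg(g)\cdot M$ and $\deg(g)$ is invertible in $k$. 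This proves (i), and (ii) follows by applying Theorem C of \cite{KTot} to $g\colon Y\to Y$.

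For the final separable statement, the same construction produces $g$ (neither separability nor invertibility of the degree was used to construct it or to verify it is int-amplified). To see that $g$ is separable, apply the chain rule $dg_{\pi(x)}\circ d\pi_x=d\pi_{f^e(x)}\circ df^e_x$ at a generic $x\in X$ where $\pi$ is smooth at both $x$ and $f^e(x)$ and $df^e_x$ is surjective (this uses separability of $f$); the right-hand side is surjective, and $d\pi_x$ is surjective, so $dg_{\pi(x)}$ must be surjective, whence $g$ is separable. The delicate step throughout is ensuring that some iterate of $f^*$ fixes the face $F$ setwise; this is the point of the finitely-many-contractions hypothesis (cleanest when $\Nef(X)$ is rational polyhedral, where there are finitely many faces overall to permute), and is the main obstacle to extending the result beyond that setting.
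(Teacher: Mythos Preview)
Your proof is correct and follows essentially the same approach as the paper: descend an iterate of $f$ through the contraction using that $f^*$ is a cone automorphism of $\Nef(X)$ permuting the relevant faces, then apply Meng's eigenvalue criterion and \cite[Theorem C]{KTot}. The paper's version is marginally leaner in one respect: it observes that $f$ permutes the set of \emph{contractions} directly (rather than all faces of $\Nef(X)$), which matches the general hypothesis ``finitely many contractions'' without needing $\Nef(X)$ to have finitely many faces overall---your argument as written uses the latter, which does hold in the rational-polyhedral and Mori-dream-space cases you single out.
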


One can get around the assumption that $X$ has only
finitely many contractions by considering only contractions
that reduce the Picard number by 1 \cite[Definition 5.2
and Theorem 7.9]{MZnormal}.

\begin{proof}
Let $f\colon X\to X$ be a separable int-amplified endomorphism,
and let $\pi\colon X\to Z$
be a contraction.
Every curve on $X$ is the image of a curve under $f$.
It follows that the pullback linear map $f^*\colon N^1(X)\to N^1(X)$
is injective, hence an isomorphism.
It also follows that $f^*(\Nef(X))=\Nef(X)$.
A contraction is determined by a face of the cone of curves,
or equivalently by a face of the nef cone in $N^1(X)$;
so $f$ permutes the set of contractions of $X$. Since $X$
has only finitely many contractions, after replacing $f$
by a positive iterate, we can assume that $f$ preserves the
contraction $\pi$. That is, we have a commutative diagram
$$\xymatrix@C-10pt@R-10pt{
X \ar[r]^f \ar[d]^{\pi} & X\ar[d]^{\pi} \\
Z \ar[r]^g & Z,
}$$
for an endomorphism $g$ of $Z$. Here $g$ is separable since $f$ is.

An endomorphism $f$ of a normal projective variety over a field $k$
is int-amplified
if and only if all eigenvalues of $f^*$ on $N^1(X)_{\C}$
have absolute value greater than 1 \cite[Theorem 1.1]{Meng-amplified}.
(Meng assumes that $k$ has characteristic zero, but his proof
works in any characteristic.) In our situation,
the pullback $\pi^*\colon N^1(Z)\to N^1(X)$ is injective (because
every curve on $Z$ is the image of some curve on $X$). Since
all eigenvalues of $f^*$ on $N^1(X)_{\C}$ have absolute value
greater than 1, the same holds for the eigenvalues of $g^*$
on $N^1(Z)_{\C}$. That is, $g$ is an int-amplified
endomorphism of $Z$. That completes the proof for a separable
int-amplified endomorphism $f$.

Suppose in addition that $f$ has degree invertible in $k$.
The degree of $g$ divides the degree of $f$
and hence is invertible in $k$. It follows that $Z$ satisfies
Bott vanishing by \cite[Theorem C]{KTot} (generalized
as Theorem \ref{endo-log-Bott} above).
\end{proof}

This immediately rules out 13 of the 19 Fano 3-folds above,
since they contract to other varieties where Bott vanishing
fails, by Table \ref{nontoric}. Namely, by \cite[section 2]{TotaroBottFano}
(or earlier work), Bott vanishing fails
for the quintic del Pezzo 3-fold $V_5$,
the quadric 3-fold $Q$, the flag manifold $W=GL(3)/B$,
and the 3-fold (3.17), a smooth divisor in $\P^1\times \P^1\times
\P^2$ of degree $(1,1,1)$.

That leaves: (3.21), (3.22), (4.5),
(4.6), (4.8), and (6.1). To analyze these, we will use:

\begin{lemma}
\label{fibers}
Let $f\colon Y\to X$ be a finite surjective morphism
of normal varieties over an algebraically closed field $k$.
Let $\pi_X\colon X\to X_1$
and $\pi_Y\colon Y\to Y_1$ be contractions, and suppose
that there is a morphism $g\colon Y_1\to X_1$ making
the diagram commute:
$$\xymatrix@C-10pt@R-10pt{
Y \ar[r]^f \ar[d]^{\pi_Y} & X\ar[d]^{\pi_X} \\
Y_1 \ar[r]^g & X_1.
}$$
Let $\Delta_X$ be the set of points $P\in X_1(k)$ with
$\pi_X^{-1}(P)$ reducible, and likewise define
$\Delta_Y\subset Y_1(k)$. Then $g^{-1}(\Delta_X)\subset \Delta_Y$.
\end{lemma}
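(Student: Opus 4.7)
My plan is to relate the fibers $\pi_Y^{-1}(Q)$ and $\pi_X^{-1}(P)$ through the canonical map to the fibered product $Y' := X \times_{X_1} Y_1$. The commutative square supplies a morphism $\phi := (f, \pi_Y) \colon Y \to Y'$ compatible with the two projections $p_1 \colon Y' \to X$ and $p_2 \colon Y' \to Y_1$. Since $p_2$ is the base change of $\pi_X$ along $g$, the fiber $p_2^{-1}(Q)$ is canonically identified, via $p_1$, with $\pi_X^{-1}(g(Q)) = \pi_X^{-1}(P)$.

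The two structural inputs I need are that $Y'$ is irreducible and that $\phi$ is finite and surjective. For irreducibility, I would use that a contraction of integral varieties over a perfect field has geometrically integral generic fiber over $\mathrm{Spec}\, k(X_1)$: the condition $(\pi_X)_*O_X = O_{X_1}$ forces $k(X_1)$ to be algebraically closed in $k(X)$, and $k$ perfect makes the extension separable, so the generic fiber of $\pi_X$ is geometrically integral; base-changing along $g$ then keeps the generic fiber of $Y' \to Y_1$ integral, so $Y'$ is irreducible. Finiteness of $\phi$ follows because $p_1 \circ \phi = f$ is quasi-finite while $\phi$ is proper over $Y_1$ (as $\pi_Y$ is proper and $p_2$ is separated). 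For surjectivity, the image $\phi(Y)$ is a closed irreducible subset of the irreducible $Y'$ with $\dim \phi(Y) = \dim Y$, and a dimension count $\dim Y' = \dim X + \dim Y_1 - \dim X_1$ gives $\dim Y = \dim Y'$ whenever $g$ is generically finite, so $\phi(Y) = Y'$ in that case.

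Given these inputs, restricting $\phi$ yields a surjection $\pi_Y^{-1}(Q) = \phi^{-1}(p_2^{-1}(Q)) \twoheadrightarrow p_2^{-1}(Q) \cong \pi_X^{-1}(P)$, which after this identification coincides with $f|_{\pi_Y^{-1}(Q)}$. If $\pi_Y^{-1}(Q)$ were irreducible, its continuous image $\pi_X^{-1}(P)$ would also be irreducible, contradicting $P \in \Delta_X$. Hence $\pi_Y^{-1}(Q)$ is reducible, i.e., $Q \in \Delta_Y$.

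The main obstacle is the surjectivity of $\phi$, which in this approach requires $g$ to be generically finite so that $\dim Y = \dim Y'$. This is automatic in all the applications of the lemma: for instance, when $f$ is a surjective endomorphism of $X$, the induced $g$ is a surjective (hence generically finite) endomorphism of the irreducible variety $X_1$. For the lemma in full generality, one can reduce to the generically finite case by restricting $Y_1$ to an appropriate closed subvariety through $Q$ on which $g$ becomes generically finite, and then applying the argument above to this restriction.
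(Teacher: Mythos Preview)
Your approach via the fiber product $Y'=X\times_{X_1}Y_1$ is different from the paper's, and the gap lies exactly where you try to show $\phi\colon Y\to Y'$ is surjective. Your argument is: $Y'$ is irreducible because the generic fiber of $Y'\to Y_1$ is integral, and then a dimension count forces $\phi(Y)=Y'$. But the inference ``integral generic fiber over an integral base $\Rightarrow$ irreducible total space'' is not valid without flatness of $Y'\to Y_1$; when $\pi_X$ is not flat (e.g.\ a blow-up or a small contraction) the base change $Y'$ can a priori acquire lower-dimensional components supported over the non-flat locus, and your dimension argument only shows that $\phi(Y)$ equals the unique top-dimensional component, not all of $Y'$. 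Filling this in is precisely the content of the fiberwise surjectivity $Y_Q\twoheadrightarrow X_{g(Q)}$, which is what you are trying to prove. The paper establishes that surjectivity directly and more simply: since $f$ is finite surjective onto the normal variety $X$, it is an \emph{open} map; removing from $Y$ the fibers over the finite set $g^{-1}(g(P))\setminus\{P\}$ and looking at the image shows that $f(Y_P)$ is open in $X_{g(P)}$, and it is also closed and nonempty, hence all of the connected fiber $X_{g(P)}$. This avoids the fiber-product machinery entirely.

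A separate point: your final reduction ``restrict $Y_1$ to a subvariety through $Q$ so that $g$ becomes generically finite'' does not work, and in fact the lemma is false as stated when $g$ is not quasi-finite (take $Y=X$, $f=\mathrm{id}$, $\pi_Y=\mathrm{id}_X$, $Y_1=X$, $g=\pi_X$; then $\Delta_Y=\emptyset$ while $g^{-1}(\Delta_X)$ is the union of the reducible fibers). The paper's own proof also tacitly uses that $g^{-1}(g(P))$ is finite, so this is a shared hypothesis rather than a defect of your approach; in every application in the paper $g$ is finite. Once you assume $g$ finite, the openness argument is the clean route.
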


\begin{proof}
For each $k$-point $P$ in $Y_1$, I claim that
the fiber of $Y$ over $P$ maps
onto the fiber of $X$ over $g(P)$. This is fairly clear geometrically;
to be precise, we can imitate the proof of \cite[Lemma 7.3]{CMZ}.
Namely, suppose that $Y_P\to X_{g(P)}$ is not surjective.
Let $S=g^{-1}(g(P))-\{P\}$, a finite set. Then $S\neq \emptyset$
and $U:=Y-\pi_Y^{-1}(S)$ is an open dense subset of $Y$
not equal to $Y$, using that $Y$ is irreducible.
Since $X$ is normal, the finite surjection $f\colon Y\to X$ is an open map
\cite[Tag 0F32]{Stacks}, and so $f(U)$ is an open dense subset of $X$.
In particular, $f(U)\cap X_{g(P)}$ is open in $X_{g(P)}$.
Note that $f(U)=(X-X_{g(P)})\cup f(Y_P)$. So $f(Y_P)$ is open
in $X_{g(P)}$. It is not empty, since $Y_P$ is not empty.
Since $f$ is proper, $f(Y_P)$ is also closed
in $X_{g(P)}$. Since $\pi_X\colon X\to X_1$ is a contraction,
$X_{g(P)}$ is connected, and so $f(Y_P)=X_{g(P)}$, as we want.

An irreducible scheme cannot map onto a reducible scheme.
Therefore, $g^{-1}(\Delta_X)$ is contained in $\Delta_Y$.
\end{proof}

Let us continue the proof of Theorem \ref{endo-3-fold},
that a Fano 3-fold with an int-amplified endomorphism of degree
invertible in $k$ is toric.
Of the 6 remaining cases, we start with
(6.1),  which
is $\P^1$ times the quintic del Pezzo surface $X$.
By Lemma \ref{bott-contraction}, it suffices to show
that $X$ has no int-amplified endomorphism of degree
invertible in $k$. This was shown by Nakayama
\cite[Proposition 4.4]{Nakayama}.
We prove a bit more, namely that $X$ has no
separable int-amplified endomorphism.

\begin{proposition}
\label{endosurface}
A smooth del Pezzo surface over an algebraically closed field $k$
that admits a separable int-amplified endomorphism
must be toric.
\end{proposition}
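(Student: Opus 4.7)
The plan is to use the classification of del Pezzo surfaces and the rigid combinatorics of $(-1)$-curves to force $f^*$ on $N^1(X)$ to be scalar, then descend to an endomorphism of $\P^2$ and obstruct it by a polynomial vanishing condition.

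The smooth toric del Pezzo surfaces over an algebraically closed field are exactly those of degree $\geq 6$, so it suffices to rule out separable int-amplified endomorphisms on a del Pezzo surface $X$ of degree $d\in\{1,\dots,5\}$. Suppose $f\colon X\to X$ is such an endomorphism. As in the first paragraph of the proof of Lemma \ref{bott-contraction}, $f$ is finite and $f^*$ is an automorphism of $N^1(X)$ preserving the nef cone; since $X$ has only finitely many $(-1)$-curves and $f^*$ permutes their numerical classes, after replacing $f$ by an iterate I may assume $f^*[C]=\lambda_C[C]$ for every $(-1)$-curve $C$, with $\lambda_C$ a positive integer.

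Write $X=\mathrm{Bl}_{p_1,\dots,p_r}(\P^2)$ with $r=9-d\geq 4$ general points, and use the basis $H,E_1,\dots,E_r$ of $N^1(X)$. The $(-1)$-curves include both the $E_i$ and the strict transforms $L_{ij}=H-E_i-E_j$ of lines through pairs of the $p_i$. Setting $f^*H=aH+\sum_k b_k E_k$ and comparing coefficients in the relation $f^*L_{ij}=\lambda_{L_{ij}}L_{ij}$ forces all $b_k$ to vanish and all $\lambda_{E_i}=\lambda_{L_{ij}}=a$, so $f^*=\mu I$ on $N^1(X)$ for some integer $\mu\geq 2$, and $\deg(f)=\mu^2$ by compatibility with the intersection form. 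Since $f$ preserves each $E_i$, it descends through the contraction $\pi\colon X\to\P^2$ to an endomorphism $\bar f\colon\P^2\to\P^2$ of polynomial degree $\mu$ that fixes each $p_i$ and satisfies $\bar f^{-1}(\{p_1,\dots,p_r\})=\{p_1,\dots,p_r\}$ set-theoretically with local multiplicity $\mu^2$ at each $p_i$. A Bezout count then forces $\bar f$ to vanish to order exactly $\mu$ at each $p_i$ (its Taylor expansion in local affine coordinates at $p_i$ begins in degree $\mu$).

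The heart of the argument is showing that no separable $\bar f$ can satisfy this vanishing condition. In characteristic zero or characteristic coprime to $\mu$, a parameter count --- roughly $8\binom{\mu+1}{2}$ independent vanishing conditions against the $3\binom{\mu+2}{2}-1$ projective coefficients of $\bar f$ --- rules out any $\bar f$ once $\mu\geq 2$. In characteristic $p$ with $p\mid\mu$, the identity $(a+b)^p=a^p+b^p$ collapses some of these conditions, and the absolute Frobenius on $\P^2$ vanishes to order $p$ at every point. The plan here is to apply the separable--purely-inseparable factorization $\bar f=h\circ F^a$, where $F^a$ is a power of the Frobenius and $h$ is separable of polynomial degree $\mu/p^a$ coprime to $p$: the vanishing conditions on $\bar f$ translate into the same conditions on $h$ at the Frobenius-twisted points $F^a(p_i)$, which remain in general position, and the good-characteristic parameter count then forces $\mu/p^a=1$, so $h$ is an automorphism and $\bar f$ is purely inseparable, contradicting separability of $f$. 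The main obstacle is precisely this last step, where the naive parameter count breaks down in characteristic $p\mid\mu$ and one must verify carefully that every putative $\bar f$ factors through a power of Frobenius.
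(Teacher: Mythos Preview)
Your reduction is correct up to the descent: after an iterate, $f^*=\mu\cdot\mathrm{id}$ on $N^1(X)$, each $E_i$ is totally invariant with $f^*E_i=\mu E_i$, and $f$ descends to a morphism $\bar f\colon\P^2\to\P^2$ of polynomial degree $\mu$ with $\bar f^{-1}(p_i)=\{p_i\}$; moreover the lift condition really does force both local components of $\bar f$ to vanish to order exactly $\mu$ at each $p_i$. The gap is in the last step. A dimension count---more conditions than parameters---is not a proof: you have not shown the conditions are independent, and even independent linear conditions can have nontrivial solutions when imposed on a nonlinear variety. You concede this yourself in the final sentence for characteristic $p\mid\mu$, but the same objection already applies in the coprime case. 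Your strategy can be completed, but by direct computation rather than by counting: taking $p_1,p_2,p_3$ to be the coordinate points, the order-$\mu$ condition at those three forces $\bar f=[c_0X^\mu:c_1Y^\mu:c_2Z^\mu]$; then $\bar f(p_4)=p_4$ with $p_4=[1{:}1{:}1]$ gives $c_0=c_1=c_2$, and finally the order of $[X^\mu:Y^\mu:Z^\mu]$ at $p_4$ is $1$ in characteristic zero and $p^a$ in characteristic $p$ (where $p^a\|\mu$), so the fourth condition forces $\mu=p^a$ and $\bar f$ is an iterate of Frobenius, contradicting separability of $f$. None of this is in your writeup.

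The paper's argument is entirely different and avoids any analysis of $\bar f$ on $\P^2$. It first reduces to the quintic del Pezzo via Lemma~\ref{bott-contraction}, and then uses one of the conic-bundle contractions $\pi_X\colon X\to\P^1$ rather than the blow-down. This fibration has exactly three reducible fibers, so its discriminant $\Delta_X\subset\P^1$ consists of three points. After an iterate, $\pi_X$ is $f$-equivariant, giving a separable endomorphism $g$ of $\P^1$ with $g^{-1}(\Delta_X)\subset\Delta_X$ by Lemma~\ref{fibers}. Separability of $g$ then produces a nonzero section of $K_{\P^1}+\Delta_X-g^*(K_{\P^1}+\Delta_X)$, a line bundle of degree $1-\deg(g)<0$; contradiction. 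The whole thing is a few lines and works uniformly in every characteristic, with no case analysis on $\mu$.
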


\begin{proof}
The classification of smooth del Pezzo surfaces has the same form
in any characteristic \cite[section III.3]{Kollarrational}.
In particular, a smooth del Pezzo surface $X$ is toric if and only if
its degree $(-K_X)^2$ is at least 6. (Then $X$ is $\P^1\times \P^1$ or else
the blow-up of $\P^2$ at 3 or fewer points in general position.)
And every smooth del Pezzo surface
of degree at most 5 is a blow-up of the quintic del Pezzo
surface (the blow-up of $\P^2$ at 4 points in general
position). By Lemma \ref{bott-contraction},
it suffices to show that the quintic del Pezzo surface $X$
does not have a separable int-amplified endomorphism.

Suppose that the quintic del Pezzo surface $X$ has a separable
int-amplified endomorphism $f$.
Let $\pi_X$ be one of the contractions of $X$
to $\P^1$. By the proof of Lemma \ref{bott-contraction},
after replacing $f$ by a positive iterate,
the contraction $\pi_X$ is $f$-equivariant, giving 
a separable int-amplified endomorphism $g$ of $\P^1$.

The point is that the contraction $\pi_X$ from the quintic
del Pezzo surface to $\P^1$ has 3 singular fibers,
and these fibers are reducible (namely, the union
of two copies of $\P^1$). Let $\Delta_X\subset \P^1$ be the discriminant locus
of $\pi_X$, consisting of 3 points. By Lemma \ref{fibers},
$g^{-1}(\Delta_X)$ is contained in $\Delta_X$. Then pulling back
differential forms gives a map
$$\Omega^1_{\P^1}(\log \Delta_X)\to g_*\Omega^1_{\P^1}(\log
\Delta_X).$$

Equivalently, we can view this as a map of line bundles
from $g^*(K_{\P^1}+\Delta_X)$ to $K_{\P^1}+\Delta_X$, hence
as a section of $K_{\P^1}+\Delta_X-g^*(K_{\P^1}+\Delta_X)$.
Since $g$ is separable, this section is not identically zero.
But $K_{\P^1}+\Delta_X$ has degree 1, so $K_{\P^1}+\Delta_X
-g^*(K_{\P^1}+\Delta_X)$ has degree $1-\deg(g)<0$, a contradiction.
Thus the quintic del Pezzo surface does not have a separable
int-amplified endomorphism.
\end{proof}

Since the 3-fold (6.1) is $\P^1$ times the quintic
del Pezzo surface, Lemma \ref{bott-contraction}
and Proposition \ref{endosurface}
give that (6.1) does not admit a separable int-amplified
endomorphism. A fortiori, it does not have an int-amplified endomorphism
of degree invertible in $k$, as considered in Theorem
\ref{endo-3-fold}.

The proof for (4.8), the blow-up $X$
of $(\P^1)^3$ along a curve $F$ of degree $(0,1,1)$,
is somewhat similar. We can take $F$ to be a point in $\P^1$
times the diagonal $\Delta_{\P^1}^2$ in $(\P^1)^2$.
Suppose that $X$ has
an int-amplified endomorphism of degree invertible in $k$.
The contraction $\pi$ of $X$ to $(\P^1)^2$
(corresponding to the last two
$\P^1$ factors) has discriminant locus the diagonal
$\Delta_{\P^1}$, and the fibers over $\Delta_{\P^1}$ are reducible.
After replacing $f$ by an iterate, $\pi$ is $f$-equivariant,
by Lemma \ref{bott-contraction}.
Write $g\colon (\P^1)^2\to (\P^1)^2$ for the resulting
endomorphism of $(\P^1)^2$, which is int-amplified and
has degree invertible in $k$. By Lemma \ref{fibers},
$\Delta_{\P^1}$ is totally invariant under $g$.

By Theorem \ref{endo-log-Bott}, it follows
that $((\P^1)^2,\Delta_{\P^1})$
satisfies log Bott vanishing. In particular, taking $A=O(1,1)$
and $E=\Delta_{\P^1}\sim A$, we have
$H^1(X,\allowbreak \Omega^1_{(\P^1)^2}(\log \Delta_{\P^1}))=0$.
But in fact, this cohomology group is not zero. Indeed, for any smooth
divisor $D$ in a smooth variety $X$ over $k$, we have
an exact sequence
of coherent sheaves \cite[Tag 0FMW]{Stacks}:
$$0\to \Omega^1_X\to \Omega^1_X(\log D)
\xrightarrow[]{\text{Res}} O_D\to 0.$$
So we have an exact sequence of cohomology groups
$H^0(D,O_D)\to H^1(X,\Omega^1_X)\to H^1(X,\Omega^1_X(\log D))$.
For $X=(\P^1)^2$ and $D=\Delta_{\P^1}$, we have $h^0(D,O_D)=1$
and $h^1(X,\Omega^1_X)=2$, and so $H^1(X,\Omega^1_X(\log D))\neq 0$
as claimed.
This contradiction shows that
the Fano 3-fold (4.8) does not have an int-amplified
endomorphism of degree invertible in $k$.

Next, we exclude the 3-fold (4.6),
the blow-up of $\P^3$ along three pairwise disjoint lines $L_1,L_2,L_3$
over $k$.
Here $X$ has three contractions to $\P^1$, using that the blow-up
of $\P^3$ along each line $L_i$ is a $\P^2$-bundle over $\P^1$.
Let $\pi\colon X\to (\P^1)^2$ be the contraction given
by the morphisms to $\P^1$ associated to $L_1$ and $L_2$. For clarity,
first consider the blow-up $Y$ of $\P^3$ along $L_1$ and $L_2$;
then the contraction $Y\to (\P^1)^2$ is a $\P^1$-bundle.
The line $L_3\subset Y$ maps to a curve of degree $(1,1)$ in $(\P^1)^2$,
which we can take to be the diagonal $\Delta_{\P^1}$. Since $X$ is the blow-up
of $Y$ along $L_3$, the discriminant locus
of $\pi\colon X\to (\P^1)^2$ is the curve $\Delta_{\P^1}$.
The fibers of $\pi$ over that curve are reducible (the union
of two copies of $\P^1$).

If $X$ has an int-amplified
endomorphism of degree invertible in $k$,
then the pair $((\P^1)^2,\Delta_{\P^1})$ satisfies
log Bott vanishing. But this is false, as shown above.
So the Fano 3-fold (4.6) does not have an int-amplified
endomorphism of degree invertible in $k$.

Next, we rule out (3.22), the blow-up $X$
of $\P^1\times \P^2$ along a conic in $p\times \P^2$,
for a $k$-point $p$ in $\P^1$. The contraction of $X$
to $\P^2$ has discriminant locus a conic $F$ in $\P^2$.
As in the arguments above, if $X$ has an int-amplified
endomorphism of degree invertible in $k$,
then so does $\P^2$, and $F$ is totally invariant. Therefore,
the pair $(\P^2,F)$ satisfies log Bott vanishing. But this is false.
Namely, let $A=O(1)$ and $E=F\sim O(2)$; then we will show
that $H^1(\P^2,\Omega^1_{\P^2}(\log F)(A-E))=H^1(\P^2,
\Omega^1_{\P^2}(\log F)(-1))$ is not zero. Use the exact sequence
of coherent sheaves on $\P^2$:
$$0\to \Omega^1_{\P^2}\to \Omega^1_{\P^2}(\log F)
\xrightarrow[]{\text{Res}} O_F\to 0.$$
By the exact sequence $0\to \Omega^1_{\P^2}(-1)\to O(-2)^{\oplus 3}
\to O(-1)\to 0$ on $\P^2$, $\Omega^1_{\P^2}(-1)$ has zero cohomology
in all degrees. So we have an isomorphism
$$H^1(\P^2,\allowbreak \Omega^1_{\P^2}(\log F)(-1))
\cong H^1(F,\allowbreak O(-1)|_F).$$
Since the conic $F$ is isomorphic to $\P^1$ and $O(-1)$ has degree $-2$
on $F$, $h^1(F,O(-1)|_F)$ is 1, not 0. So log Bott vanishing
fails for $(\P^2,F)$. It follows that the 3-fold (3.22)
does not have
an int-amplified endomorphism of degree invertible in $k$.

The last cases are (3.21) and (4.5). These are handled by:

\begin{lemma}
\label{nodal}
The Fano 3-folds (3.21) and (4.5) contract to the quintic del Pezzo
3-fold with one node, which does not satisfy Bott vanishing.
\end{lemma}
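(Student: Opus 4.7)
The plan has two essentially independent parts: constructing the contractions to the nodal quintic del Pezzo 3-fold $V_5'$, and then verifying that $V_5'$ fails Bott vanishing.

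For the contractions, I would first note that (4.5) is obtained from (3.21) by an additional blow-up: blowing up $\P^1\times\P^2$ along the $(2,1)$-curve produces (3.21), and then blowing up the strict transform of the disjoint $(1,0)$-curve produces (4.5). This gives a natural contraction $(4.5)\to(3.21)$, so it suffices to produce a contraction $(3.21)\to V_5'$. Since $X=(3.21)$ has Picard rank three, its Mori cone has three extremal rays: the blow-down $X\to\P^1\times\P^2$, the conic-bundle projection $X\to\P^2$ inherited from $\P^1\times\P^2\to\P^2$, and a third ray. By the Mori--Mukai classification of extremal contractions on smooth Fano 3-folds, this third ray should give a divisorial contraction crushing a specific divisor (most naturally the strict transform of a fiber of $\P^1\times\P^2\to\P^1$ chosen with respect to the ramification of $C\to\P^1$) to a point, yielding a 3-fold $Y$ with a single ordinary double point. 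Matching $(-K_Y)^3=40$ and the Hilbert polynomial, together with the singularity type, then identifies $Y$ with $V_5'$.

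For the failure of Bott vanishing on $V_5'$, the cleanest route uses the resolution $\pi\colon X=(3.21)\to V_5'$ just constructed. Let $F\subset X$ be the divisor contracted to the node and $A$ an ample Weil divisor on $V_5'$. The reflexive sheaf $\Omega^{[i]}_{V_5'}(A)$ can be computed from $\Omega^i_X(\pi^*A)$ via a local calculation at the ordinary double point, and then the Leray spectral sequence for $\pi$ relates $H^j(V_5',\Omega^{[i]}_{V_5'}(A))$ to cohomology on $X$ where the answer can be made explicit. Alternatively, one may appeal to upper semicontinuity in a one-parameter smoothing $\mathcal{V}\to T$ from $V_5'$ to the smooth quintic $V_5$: extending an ample divisor to a relatively ample $\mathcal{A}$ on $\mathcal{V}$ and using that Bott vanishing is already known to fail on $V_5$ (for instance, $H^1(V_5,\Omega^1_{V_5}(1))\ne 0$), the same non-vanishing is forced on the central fiber $V_5'$.

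The main obstacle I expect is pinning down the third extremal contraction of (3.21) and identifying its target precisely as $V_5'$, rather than some other singular Fano 3-fold of anticanonical degree 40 with a single node. Once this identification is in hand, the failure of Bott vanishing on $V_5'$ follows either from the direct cohomological computation via the resolution or from the semicontinuity argument applied to a smoothing family.
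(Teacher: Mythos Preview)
Your plan for the contraction is essentially the paper's: note that (4.5) blows down to (3.21), and then identify the remaining extremal contraction of (3.21) with the map to the nodal quintic del Pezzo 3-fold. The paper simply cites Prokhorov for this identification rather than reconstructing it from the Mori cone.

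For the failure of Bott vanishing on $Y=V_5'$, however, the paper takes a quite different and more direct route than either of your two options. It shows $\chi(Y,\Omega^{[2]}_Y(1))=-2$ in two steps. First, a local analysis at the node (via Graf's description of the torsion and cotorsion of $\Omega^j$ for hypersurface singularities) gives the exact sequence $0\to\Omega^2_Y\to\Omega^{[2]}_Y\to O_P\to 0$, so $\chi(\Omega^{[2]}_Y(1))=1+\chi(\Omega^2_Y(1))$. Second, since $Y$ and the smooth $V_5$ are both hyperplane sections of the same smooth 4-fold $W\subset\P^7$, the exact sequences expressing $\chi(\Omega^2_{(-)}(1))$ in terms of bundles on $W$ are identical, whence $\chi(Y,\Omega^2_Y(1))=\chi(V_5,\Omega^2_{V_5}(1))=-3$.

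Your proposed routes have genuine gaps. The resolution/Leray sketch is too vague to evaluate. For the semicontinuity argument: the documented failure of Bott vanishing on $V_5$ is $\chi(\Omega^2_{V_5}(1))=-3$ (hence $H^1(\Omega^2_{V_5}(1))\neq 0$), not $H^1(\Omega^1_{V_5}(1))\neq 0$; your stated example would need independent justification. More seriously, semicontinuity applied to $\Omega^i_{\mathcal V/T}(\mathcal A)$ controls $h^j(Y,\Omega^i_Y(A))$, not $h^j(Y,\Omega^{[i]}_Y(A))$; for $i=2$ these differ, and bridging them requires exactly the local node computation the paper carries out. So the semicontinuity approach does not actually avoid the paper's key calculation.
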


\begin{proof}
Here (4.5) is a blow-up
of the Fano 3-fold of type (3.21). Next, the Fano 3-fold $X$
of type (3.21) is the blow-up of $\P^1\times \P^2$ along
a curve $F$ of degree $(2,1)$.
There is a contraction
from $X$ to the quintic del Pezzo 3-fold $Y$ with one node
\cite[section 5.4.2]{Prokhorov}. It remains to show that
$Y$ does not satisfy Bott vanishing. Indeed, I claim
that the Euler characteristic
$\chi(Y,\Omega^{[2]}_Y(1))$ is $-2<0$. This is harder
than previous cases because $Y$ is singular, but it is still manageable.

To prove this, it is convenient to know how the sheaf
$\Omega^2_Y$ is related to its reflexive hull $\Omega^{[2]}_Y$,
for a 3-fold $Y$ with a node. More generally,
let $Y$ be a normal hypersurface
in a smooth variety $W$ with $\dim(W)=n+1$,
so that locally $Y$ is the zero locus
of a regular function $g$. 
Define the {\it torsion }and {\it cotorsion }of a coherent sheaf
$M$ to be the kernel and cokernel of the natural map
$M\to M^{**}$.
Consider the complex $K$:
$$0\to \Omega^0_W|_Y\to \Omega^1_W|_Y\to\cdots\to
\Omega^{n+1}_W|_Y\to 0,$$
with differentials given by $\wedge dg$. Graf described
the torsion and cotorsion of the sheaves $\Omega^j_Y$
in terms of this complex: we have
$H^j(K)\cong \tor \Omega^j_Y$ and $H^j(K)\cong \cotor \Omega^{j-1}_Y$
\cite[Theorem 1.11]{Graf}. 
For $Y$ the 3-fold node, we can take
$W=A^4$ and $g=xy-zw$. The complex above is the Koszul complex
for the regular sequence $\partial g/\partial x=y$,
$\partial g/\partial y=x$,
$\partial g/\partial z=-w$,
$\partial g/\partial w=-z$, tensored over $O_W$ with $O_Y$.
These four functions generate the ideal of the origin $P$ in $W=A^4$;
so $\tor \Omega^i_Y\cong \Tor_{4-i}^{O_W}(O_Y,O_P)$
and $\cotor \Omega^i_Y\cong \Tor_{3-i}^{O_W}(O_Y,O_P)$.
These Tor groups are easy to compute, using the free resolution
$0\to O_W\xrightarrow[]{g} O_W\to O_Y\to 0$ of $O_Y$ as an $O_W$-module.
Namely, it follows that $\Tor_*^{O_W}(O_Y,O_P)$
is the homology of the complex
$$0\to O_P \xrightarrow[]{0} O_P\to 0,$$
which is a 1-dimensional $k$-vector space in degrees 1 and 0, and zero
in other degrees.
Therefore, $\Omega^1_Y$ is reflexive,
while $\Omega^2_Y$ is torsion-free and its cotorsion
is a 1-dimensional vector space, supported 
at the node $P$. That is, we have a short exact sequence
of sheaves on a nodal 3-fold $Y$:
$$0\to \Omega^2_Y\to \Omega^{[2]}_Y\to O_P\to 0.$$

Hence, for $Y$ the quintic del Pezzo 3-fold with one node,
we have $\chi(Y,\Omega^{[2]}_Y(1))=1+\chi(Y,\Omega^2_Y(1))$.
It remains to show that $\chi(Y,\Omega^2_Y(1))=-3$.

For a nodal 3-fold $Y$ in a smooth 4-fold $W$ over $k$
(as for any effective Cartier
divisor), we have an exact
sequence of coherent sheaves on $Y$ \cite[Tag 00RU]{Stacks}:
$$O(-Y)|_Y\to \Omega^1_W|_Y\to \Omega^1_Y\to 0.$$
Taking exterior powers, we have an exact sequence
$\Omega^{j-1}_Y(-Y)\to \Omega^j_W|_Y\to \Omega^j_Y\to 0$
for any $j$. For $j=2$, the map $\Omega^1_Y(-Y)\to \Omega^2_W|_Y$
is clearly injective
outside the node. Since $\Omega^1_Y(-Y)$ is torsion-free (by the analysis
above), this map is actually injective as a map of sheaves on
all of $Y$.
That is, we have an exact sequence $0\to\Omega^1_Y(-Y)\to \Omega^2_W|_Y
\to \Omega^2_Y\to 0$. Likewise, we have an exact sequence
$0\to O_Y(-Y)\to \Omega^1_W|_Y\to \Omega^1_Y\to 0$ on $Y$.
Finally, we can tensor the exact sequence
$0\to O_W(-Y)\to O_W\to O_Y\to 0$ with any vector bundle on $W$,
giving $0\to \Omega^1_W(-Y)\to \Omega^1_W\to \Omega^1_W|_Y\to 0$
and $0\to \Omega^2_W(-Y)\to \Omega^2_W\to \Omega^2_W|_Y\to 0$.

Apply this to the quintic del Pezzo 3-fold $Y\subset \P^6$ with a node,
which is a codimension-3 linear section
of the Grassmannian $\Gr(2,5)\subset \P^9$. The quintic del Pezzo 3-fold
with one node is unique up to isomorphism,
with explicit equations labelled $X_{5,2,4}$ in \cite[section A.1.6]{KP}.
By the equations, $Y$ is a hyperplane
section of a smooth codimension-2 linear section $W\subset \P^7$.
By the previous paragraph, we can rewrite $\chi(Y,\Omega^2_Y(1))$
in terms of Euler characteristics on $W$; but this formula
would be exactly the same with $Y$ replaced by a smooth hyperplane
section $V_5\subset \P^6$. Therefore, $\chi(Y,\Omega^2_Y(1))=
\chi(V_5,\Omega^2_{V_5}(1))=-3$ \cite[section 2]{TotaroBottFano}.
As discussed above, it follows that $\chi(Y,\Omega^{[2]}_Y(1))=-2$,
and so $Y$ does not satisfy Bott vanishing. Lemma \ref{nodal}
is proved.
\end{proof}

It follows that neither (3.21) nor (4.5)
admits an int-amplified endomorphism
of degree invertible in $k$.
Theorem \ref{endo-3-fold} is proved.
\end{proof}

\section{Images of toric varieties: del Pezzo surfaces
and Fano 3-folds}

Occhetta and Wi\'sniewski conjectured that a smooth complex
projective variety $X$ that admits a surjective morphism from a proper
toric variety must be toric \cite{OW}. (This is known for contractions,
and so it suffices to consider a finite surjective morphism.) 
Occhetta-Wi\'sniewski's conjecture was proved
by Achinger, Witaszek, and Zdanowicz for $X$ of dimension at most 2,
and also for $X$ a Fano 3-fold \cite[proof of Theorem 4.4.1]{AWZ1},
\cite[Theorems 6.9 and 7.7]{AWZ2}. Using Bott vanishing,
we will reprove this result for Fano 3-folds
and extend it to positive characteristic
(Theorem \ref{image-3-fold}). Kawakami and I proved this extension earlier
for Fano 3-folds with Picard number 1
\cite[Proposition 3.10]{KTot}.

Another approach to Theorem \ref{image-3-fold} appeared
after the first version of this paper. Namely, Kawakami
and Takamatsu used Lemma \ref{trace} above to show that the image
of an $F$-liftable variety in characteristic $p$ by a morphism
of degree prime to $p$ is $F$-liftable \cite[Corollary 3.13]{KTak}.
In particular, the image of a toric variety by a morphism
of degree prime to $p$ is $F$-liftable. But Achinger-Witaszek-Zdanowicz
showed that a smooth Fano 3-fold that is $F$-liftable must be toric.
So a Fano 3-fold that is an image of a toric variety by a morphism
of degree prime to $p$ must be toric.

We will prove Theorem \ref{image-3-fold} using log Bott vanishing
rather than $F$-liftability.
Thus our method for analyzing images of toric varieties
closely parallels our approach to finding which varieties
have nontrivial endomorphisms (Theorem \ref{endo-3-fold}).

We start with the analogous result in dimension 2.

\begin{proposition}
\label{toricsurface}
A smooth del Pezzo surface over an algebraically closed field $k$
that is the image of a proper toric variety must be toric.
\end{proposition}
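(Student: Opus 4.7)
The plan is to mimic the strategy of Proposition \ref{endosurface} very closely, replacing ``separable int-amplified endomorphism'' by ``image of a proper toric variety'' and using Lemma \ref{fibers} in place of the logarithmic form-pullback argument. First I would reduce to the quintic del Pezzo surface. By the classification of smooth del Pezzo surfaces (same shape in every characteristic, \cite[section III.3]{Kollarrational}), $X$ is toric whenever $(-K_X)^2\geq 6$, and every del Pezzo surface of degree $\leq 5$ admits a birational morphism to the quintic del Pezzo surface $X_5$. So if $f\colon Y\to X$ is a surjection from a proper toric variety, composing with the birational contraction gives a surjection $Y\to X_5$, and it suffices to show that $X_5$ is not an image of a proper toric variety.

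Suppose for contradiction that $f\colon Y\to X_5$ is such a surjection. After replacing $Y$ by the intermediate variety of its Stein factorization (still toric by \cite[Proposition 2.7]{Tanakakv}), I may assume $f$ is finite. The key geometric input is that $X_5$ carries a conic bundle structure $\pi_X\colon X_5\to \P^1$ --- the pencil of conics in $\P^2$ through the four blown-up points --- with exactly three reducible fibers, over a set $\Delta_X\subset \P^1(k)$ of cardinality $3$. (This is the same contraction used in the proof of Proposition \ref{endosurface}.)

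Now I would take the Stein factorization of $\pi_X\circ f\colon Y\to \P^1$, giving $Y\xrightarrow{\pi_Y}Y_1\xrightarrow{g}\P^1$. By \cite[Proposition 2.7]{Tanakakv}, $Y_1$ is a proper toric curve (so $Y_1\cong\P^1$) and $\pi_Y$ is equivariant with respect to the induced toric structures. Consequently, the fibers of $\pi_Y$ over the dense torus $\mathbb{G}_m\subset Y_1$ are all translates of one another under the torus action on $Y$, each a proper toric curve, hence isomorphic to $\P^1$ and in particular irreducible. So the set $\Delta_Y\subset Y_1(k)$ of points with reducible $\pi_Y$-fiber is contained in the two torus-fixed points $\{0,\infty\}$. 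Applying Lemma \ref{fibers} to the square $(f,\pi_X,\pi_Y,g)$ gives $g^{-1}(\Delta_X)\subseteq \Delta_Y$, hence $|g^{-1}(\Delta_X)|\leq 2$. But $g\colon \P^1\to \P^1$ is finite surjective, so $g^{-1}(\Delta_X)$ contains at least one $k$-point above each of the $3$ points of $\Delta_X$, giving $|g^{-1}(\Delta_X)|\geq 3$; contradiction.

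The one step to nail down is that the Stein-factorization contraction $\pi_Y\colon Y\to Y_1$ is genuinely a toric (equivariant) morphism, not merely a morphism with toric source and target, so that its fibers over $\mathbb{G}_m$ really are torus-translates of one another. This is part of \cite[Proposition 2.7]{Tanakakv}, which constructs $Y\to Y_1$ from a coarsening of the fan of $Y$. No ``degree invertible in $k$'' or separability hypothesis enters anywhere, so the conclusion holds in every characteristic, consistently with the proposition statement.
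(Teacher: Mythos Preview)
Your proposal is correct and follows essentially the same route as the paper's proof: reduce to the quintic del Pezzo surface, pass to a finite toric cover via Stein factorization, take the Stein factorization of the composition with a conic fibration $X_5\to\P^1$, and derive a contradiction from Lemma \ref{fibers} because the three reducible fibers on $X_5$ would have to be accounted for by the at most two toric boundary points of $Y_1\cong\P^1$. Your explicit description of the conic bundle (conics through the four blown-up points) and the torus-translate argument for bounding $\Delta_Y$ are minor elaborations, but the architecture is identical.
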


Surprisingly, Proposition \ref{toricsurface} works in any characteristic
with no assumption on the degree of the morphism $f$; $f$ may even
be inseparable. The proof does not use Bott vanishing.

\begin{proof}
A smooth del Pezzo surface $X$ is toric if and only if
its degree $(-K_X)^2$ is at least 6.
And every smooth del Pezzo surface
of degree at most 5 is a blow-up of the quintic del Pezzo
surface (the blow-up of $\P^2$ at 4 points in general
position). So it suffices to show that there is no surjection $f$
from a proper toric variety $Y$ to the quintic del Pezzo
surface $X$.

Let $Y\to Z\to X$ be the Stein
factorization of $f$; then $Z$ is also toric, since
every contraction of a toric variety is toric
\cite[Proposition 2.7]{Tanakakv}.
Replacing $Y$ by $Z$, we can assume
that $f\colon Y\to X$ is finite and surjective.

Let $\pi_X$ be one of the contractions of $X$
to $\P^1$. Let $Y\to Y_1\to \P^1$ be the Stein factorization
of the composition $Y\to X\to \P^1$, where we write $\pi_Y\colon Y\to Y_1$
and $g\colon Y_1\to \P^1$. Then $Y_1$ is a proper
toric curve, hence isomorphic to $\P^1$, and $g
\colon Y_1\to \P^1$ is finite and surjective.

As we used in the proof of Proposition \ref{endosurface},
the contraction $\pi_X$ from the quintic
del Pezzo surface to $\P^1$ has 3 singular fibers,
and these fibers are reducible (namely, the union
of two copies of $\P^1$). Let $\Delta_X\subset \P^1$ be the discriminant locus
of $\pi_X$, consisting of 3 points. By Lemma \ref{fibers},
$g^{-1}(\Delta_X)$ is contained in the discriminant locus
$\Delta_Y$ of $\pi_Y\colon Y\to Y_1$.
But $Y\arrow Y_1$ is a toric morphism, so its discriminant
locus is contained in the toric divisor of $Y_1$, which consists
of 2 points. Since $g\colon Y_1\to \P^1$ is surjective, this is
a contradiction (2 points cannot map onto 3 points).
We have shown that
the quintic del Pezzo surface is not the image of a proper toric variety.
\end{proof}

\begin{theorem}
\label{image-3-fold}
Let $X$ be a smooth Fano 3-fold over an algebraically closed
field $k$.
If $X$ is the image of a proper toric 3-fold by a morphism
of degree invertible in $k$, then $X$ is toric.
\end{theorem}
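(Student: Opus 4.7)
Plan.

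The plan is to parallel the proof of Theorem \ref{endo-3-fold} closely, with Theorem \ref{toric-log-Bott} and Proposition \ref{toricsurface} playing the roles that Theorem \ref{endo-log-Bott} and Proposition \ref{endosurface} played there. First, using Stein factorization and the fact that every contraction of a toric variety is toric \cite[Proposition 2.7]{Tanakakv}, I may assume there is a finite surjection $f\colon Y\to X$ of degree $d$ invertible in $k$ from a proper toric 3-fold $Y$. By \cite[Proposition 3.10]{KTot}, $X$ then satisfies Bott vanishing, so $X$ is either one of the 18 smooth toric Fano 3-folds or one of the 19 non-toric cases in Table \ref{nontoric}; I must eliminate the 19 non-toric possibilities.

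The image analog of Lemma \ref{bott-contraction} is immediate: for any birational contraction $X\to Z$, the Stein factorization of $Y\to X\to Z$ presents $Z$ as the image of a (still toric) proper variety by a finite morphism of degree $d$ (since birational maps preserve generic degree), so $Z$ also satisfies Bott vanishing by \cite[Proposition 3.10]{KTot}. This rules out the 13 cases of Table \ref{nontoric} that contract birationally to $V_5$, $Q$, $W$, or (3.17), as well as (3.21) and (4.5), which by Lemma \ref{nodal} contract birationally to the nodal quintic del Pezzo 3-fold. For (6.1) $=\P^1\times S_5$, Stein factorization of $Y\to X\to S_5$ presents $S_5$ as the image of a proper toric surface; Proposition \ref{toricsurface} (which imposes no degree condition) then forces $S_5$ to be toric---a contradiction.

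For the remaining cases (4.8), (4.6), and (3.22), each admits a contraction $\pi\colon X\to Z$ onto a smooth toric surface $Z$ (namely $(\P^1)^2$ or $\P^2$) with generic fiber $\P^1$ and irreducible discriminant curve $\Delta_Z$ over which the fibers of $\pi$ are reducible. The proof of Theorem \ref{endo-3-fold} showed by explicit computation that $(Z,\Delta_Z)$ fails log Bott vanishing, so my goal is to deduce log Bott vanishing for $(Z,\Delta_Z)$ from the image hypothesis. Form the Stein factorization $Y\xrightarrow{\mu}Y'\xrightarrow{g}Z$ of $\pi\circ f$; by \cite[Proposition 2.7]{Tanakakv}, $Y'$ is toric and $\mu$ is a toric contraction, so the locus of points of $Y'$ with reducible $\mu$-fiber lies in the toric boundary $\partial Y'$. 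Lemma \ref{fibers} then gives $g^{-1}(\Delta_Z)\subset\partial Y'$, exhibiting $g^{-1}(\Delta_Z)$ as a union of toric divisors of $Y'$.

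The main step to verify---and what I expect to be the most delicate---is that $\deg(g)$ is invertible in $k$, as required to apply Theorem \ref{toric-log-Bott}. Since the generic fiber of $\pi$ is $\P^1$, the function field extension $k(X)/k(Z)$ is purely transcendental of transcendence degree $1$, so the algebraic closure of $k(Z)$ in $k(Y)$---which is exactly $k(Y')$---intersects $k(X)$ only in $k(Z)$. Hence $\deg(g)=[k(Y'):k(Z)]=[k(Y')\cdot k(X):k(X)]$, and the latter divides $[k(Y):k(X)]=d$ because $k(Y')\cdot k(X)$ is an intermediate field of the degree-$d$ extension $k(Y)/k(X)$. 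Thus $\deg(g)\mid d$, so it is invertible in $k$. Theorem \ref{toric-log-Bott} applied to $g\colon Y'\to Z$ with boundary $\Delta_Z$ now yields log Bott vanishing for $(Z,\Delta_Z)$, contradicting the nonvanishing computation in Theorem \ref{endo-3-fold} and completing the argument.
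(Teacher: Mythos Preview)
Your proposal is correct and follows essentially the same route as the paper: reduce via Stein factorization to a finite toric cover, use Bott vanishing to cut down to the 19 cases, dispose of 15 via (birational) contractions to varieties failing Bott vanishing, handle (6.1) via Proposition~\ref{toricsurface}, and treat (3.22), (4.6), (4.8) by Stein-factorizing over the surface contraction and invoking Theorem~\ref{toric-log-Bott} against the known failure of log Bott vanishing for $(\P^2,\text{conic})$ and $((\P^1)^2,\Delta_{\P^1})$.

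The one point where you are more explicit than the paper is the verification that $\deg(g)$ is invertible in $k$ for the non-birational contractions. The paper simply asserts this; your linear-disjointness argument (using that $k(X)/k(Z)$ is purely transcendental since the generic fiber is $\P^1$, so $[k(Y'):k(Z)]=[k(Y')\cdot k(X):k(X)]\mid[k(Y):k(X)]=d$) is a clean way to fill this in.
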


As in Theorem \ref{endo-3-fold},
the proof uses Tanaka's theorem that the classification of smooth
Fano 3-folds has essentially the same form
in every characteristic \cite[Theorem 1.1]{Tanaka4},
\cite[Theorem 1.1]{Tanaka2}.
Without that, our proof applies to Fano 3-folds in any characteristic
that are given by the same construction
as one of the Fano 3-folds over $\C$
(classified by Iskovskikh and Mori-Mukai).

\begin{proof}
Assume that $X$ is the image of a proper toric 3-fold $Y$
by a morphism $f$ of degree invertible in $k$.
After replacing $f$ by its Stein factorization,
we can also assume that $f\colon Y\to X$ is finite. Here $Y$ is still
toric, because
every contraction of a toric variety is toric
\cite[Proposition 2.7]{Tanakakv}.
Then $X$ satisfies Bott vanishing, by Theorem
\ref{toric-log-Bott}.
There are exactly 19 non-toric
Fano 3-folds (up to isomorphism)
that satisfy Bott vanishing
\cite[Theorem 0.1]{TotaroBottFano}. In Mori-Mukai's
numbering, these are (2.26), (2.30), (3.15)--(3.16),
(3.18)--(3.24), (4.3)--(4.8), (5.1), and (6.1).
(To be precise, the answer is a subset of this in characteristic 2,
where only 9 non-toric Fano 3-folds on the known list
satisfy Bott vanishing \cite[section 2]{TotaroBottFano}.)
We need to show that none of these 19 varieties can be
the image of a toric variety $Y$ by a finite morphism $f$
of degree invertible in $k$.

Of these 19 varieties, 15 have contractions that
do not satisfy Bott vanishing, by the proof of Theorem
\ref{endo-3-fold}, including Lemma \ref{nodal}.
This excludes all the cases
except four: (3.22), (4.6), (4.8), and (6.1).

Here (6.1) is $\P^1$ times the quintic del Pezzo surface.
That surface is not an image of a toric variety,
by Proposition \ref{toricsurface}. So (6.1) is not the image
of a toric variety.

Next, we rule out (3.22), the blow-up $X$
of $\P^1\times \P^2$ along a conic in $q\times \P^2$,
for a $k$-point $q$ in $\P^1$. The contraction of $X$
to $X_1:=\P^2$ has discriminant locus a conic $F\subset \P^2$,
and the fibers over points of $F$ are reducible (a union
of two $\P^1$'s).
Suppose that there is a proper toric 3-fold $Y$ over $k$
with a morphism $f\colon Y\to X$ such that $\deg(f)$
is invertible in $k$.

Let $Y\to Y_1\to X_1$ be the Stein factorization
of the composition $Y\to X\to X_1$:
$$\xymatrix@C-10pt@R-10pt{
Y \ar[r]_{f} \ar[d] & X\ar[d] \\
Y_1 \ar[r] & X_1.
}$$
Then $Y_1$ is a proper
toric surface over $k$, and $h\colon Y_1\to X_1$ is finite, of degree
invertible in $k$. The discriminant locus of $Y\to Y_1$ is
contained in the union of the toric divisors.
By Lemma \ref{fibers}, $h^{-1}(F)$ is contained
in the discriminant locus of $Y\to Y_1$, hence
in the union of the toric divisors in $Y_1$.

By Theorem \ref{toric-log-Bott}, it follows
that the pair $(X_1,F)=(\P^2,F)$ satisfies log Bott vanishing.
But that is false, by the proof of Theorem \ref{endo-3-fold}.
Thus (3.22) is not the image of a toric 3-fold by a morphism
of degree invertible in $k$.

A similar argument rules out (4.8),
the blow-up $X$
of $(\P^1)^3$ along a curve of degree $(0,1,1)$.
The contraction of $X$ to $X_1:=(\P^1)^2$ (corresponding to the last two
$\P^1$ factors) has discriminant locus a curve $F$ of degree $(1,1)$,
which we can take to be the diagonal
$\Delta_{\P^1}$ in $(\P^1)^2$. The fibers over points in $F$
are reducible (a union of two $\P^1$'s).
Let $Y\to Y_1\to X_1$ be the Stein factorization
of the composition $Y\to X\to X_1$:
$$\xymatrix@C-10pt@R-10pt{
Y \ar[r]_{f} \ar[d] & X\ar[d] \\
Y_1 \ar[r] & X_1.
}$$
Then $Y_1$ is a proper
toric surface over $k$, and $h\colon Y_1\to X_1$ is finite, of degree
invertible in $k$. 

By the same argument as in the previous case, $h^{-1}(F)$
must be a toric divisor in $Y_1$.
By Theorem \ref{toric-log-Bott}, it follows
that the pair $(X_1,F)=((\P^1)^2,\Delta_{\P^1})$ satisfies log Bott vanishing.
But that is false, by the proof of Theorem \ref{endo-3-fold}.
Thus (4.8) is not the image of a toric 3-fold by a morphism
of degree invertible in $k$.

Finally, let $X$ be the Fano 3-fold (4.6),
the blow-up of $\P^3$ along three pairwise disjoint lines $L_1,L_2,L_3$
over $k$.
Suppose that there is a proper toric 3-fold $Y$ over $k$
with a morphism $f\colon Y\to X$ such that $\deg(f)$
is invertible in $k$.
By the proof of Theorem \ref{endo-3-fold},
$X$ has a contraction to $X_1:=(\P^1)^2$ with
discriminant locus the diagonal $F:=\Delta_{\P^1}$
in $(\P^1)^2$. The fibers over $F$ are reducible (the union
of two $\P^1$'s). Then the same argument in the previous case
(using that $((\P^1)^2,\Delta_{\P^1})$ does not satisfy log Bott vanishing)
yields a contradiction. So
the Fano 3-fold (4.6) is not the image
of a toric 3-fold by a morphism
of degree invertible in $k$.
\end{proof}

\section*{Acknowledgements}

This work was supported by NSF grant DMS-2054553, the
National Center for Theoretical Sciences (NCTS),
Simons Foundation grant SFI-MPS-SFM-00005512,
and the Charles Simonyi Endowment
at the Institute for Advanced Study.
Thanks to Louis Esser, Tatsuro Kawakami, Supravat Sarkar,
and the referee.

\bibliography{endo.bib}
\bibliographystyle{alpha}

\end{document}